\theoremstyle{plain}
\newtheorem*{theorem*}{Theorem}
\newtheorem{theorem}{Theorem}[section]
\newtheorem{definition}[theorem]{Definition}
\newtheorem{lemma}[theorem]{Lemma}
\newtheorem{corollary}[theorem]{Corollary}
\newtheorem{example}[theorem]{Example}
\newtheorem{remark}[theorem]{Remark}
\newtheorem{question}[theorem]{Question}
\newtheorem{conjecture}[theorem]{Conjecture}
\newcommand{\R}{{\mathbb R}}
\newcommand{\N}{\mathbb N}
\newcommand{\nil}{\varnothing}
\newcommand{\defn}[1]{\textbf{#1}}
\newcommand{\boundary}{\partial}
\newcommand{\mc}[1]{\mathcal{#1}}
\newcommand{\tr}{\operatorname{tr}}
\newcommand{\weight}{\omega}
\newcommand{\width}{\Omega}
\newcommand{\co}{\mskip0.5mu\colon\thinspace}
\newcommand{\spacing}{ \parskip 6.6pt \parindent 0pt}
\begin{document}

   \title{Combinatorial minimal surfaces in pseudomanifolds and other complexes}
   \author{Weiyan Huang, Daniel Medici, Nick Murphy, Haoyu Song, Scott A. Taylor, Muyuan Zhang}

 \begin{abstract}
We define combinatorial analogues of stable and unstable minimal surfaces in the setting of weighted pseudomanifolds and their generalizations. We prove that, under mild conditions, such combinatorial minimal surfaces always exist. We use a technique, adapted from work of Johnson and Thompson, called \emph{thin position}. Thin position is defined using orderings of the $n$-dimensional simplices of an $n$-dimensional pseudomanifold. In addition to defining and finding combinatorial minimal surfaces, from thin orderings, we derive invariants of even-dimensional closed simplicial pseudomanifolds called \emph{width} and \emph{trunk}. We study the additivity properties of these invariants under connected sum and prove theorems analogous to theorems in knot theory and 3-manifold theory.
\end{abstract}



   \maketitle

 \section{Introduction}
 
Given a Riemannian $n$-manifold $M$, a minimal surface is a codimension 1 submanifold $U \subset M$ that is a critical point of the area (i.e. $(n-1)$-dimensional volume) functional on all compactly supported variations of $U$ \cite[Definition 2.1.4]{MeeksPerez}. The quest of discrete differential geometry is to convert definitions and theorems from differential geometry into definitions and results concerning discrete objects, such as simplices and triangulations. Typically, these discrete objects inherit a geometry from an embedding in some (typically low-dimensional) Euclidean space. In this paper, we embark on the related (but different) quest to convert differential geometric ideas into combinatorial analogues. We adapt ideas of Thompson \cite{Thompson} and Johnson \cite{Johnson} to produce examples of \emph{combinatorial} minimal surfaces in weighted pseudomanifolds.

Deferring our definitions until Sections \ref{Combinatorial Minimal Surfaces} and  \ref{orderings}, we:
\begin{itemize}
\item Show how an ordering $\mc{O}$ of the top-dimensional cells of an $n$-dimensional pseudomanifold having non-negative weights on the $(n-1)$ faces can be put into a ``local thin position'' $\mc{O}'$. (Definition \ref{def: loc thin})
\item Prove that in a locally thin ordering, local minima (of a certain function) are stable combinatorial minimal surfaces and local maxima are either stable or unstable combinatorial minimal surfaces (Theorem \ref{Main Theorem}).
\item Use thin position to define invariants (called \emph{width} and \emph{trunk}) of simplicial pseudomanifolds. We study the additivity properties of these invariants under connected sum and prove theorems analogous to theorems in knot theory and 3-manifold theory.
\item Pose a list of conjectures and questions inspired by results in knot theory and 3-manifold theory.
\end{itemize}

We begin with background on our motivation and techniques. 

\subsection{Background}
Given a concept in differential geometry, there may be several ways of defining a combinatorial analogue. For instance, given a triangulated surface,  a ``curve'' might be considered to be either a cycle in the 1-skeleton of the triangulation (as in \cite{Thompson}) or as a normal curve transverse to the triangulation (as in \cite{Matveev}). Similarly, given a triangulation of a 3-manifold, a surface might be considered either as a subcomplex of the triangulation or as a certain kind of normal surface (as in \cite{JR}). When deciding what additional criteria should be imposed on a curve or surface in order to be considered a geodesic or minimal surface, certain aspects of the differential theory are selected to be of primary importance. For instance, least weight normal surfaces function much as least area surfaces in differential geometry (see \cite{JR}). Similarly, in \cites{Bobenko, BS}, the isothermic properties of minimal surfaces are chosen as the key feature to adapt to the discrete geometric setting. In this paper, we elect to adapt the notion of minimal surface to the combinatorial setting by adapting Pitts' construction \cite{Pitts} of differential geometric minimal surfaces using sweepouts. 

Consider orientable surfaces in a closed orientable 3-manifold endowed with a Riemannian metric. One view of minimal surfaces is that they are critical points of certain functionals on parameterized families of surfaces; the number of parameters used to define the family should correspond to the index of the minimal surface. In 3-manifold topology, this view of minimal surfaces has been reinterpreted in terms of compressing discs for surfaces (see \cite{Pitts} and \cite{Bachman} to begin). A surface without compressing discs (and which is not a 2-sphere bounding a 3-ball) is the topological analogue of a stable minimal surface. A surface which has compressing discs on both sides, but which does not have a pair of disjoint compressing discs on opposite sides, (a \defn{weakly incompressible} surface) is the analogue of an index 1 minimal surface. When a weakly incompressible surface is in an irreducible 3-manifold and can be compressed down to trivial 2-spheres in both directions, it is called a \defn{strongly irreducible Heegaard surface}. Recently Ketover and Liokumovich \cite{KetoverLiokumovich} confirmed a conjecture of Pitts and Rubinstein \cite{Rub} by showing that these ``topological minimal surfaces'' are equivalent (e.g. isotopic) to ``geometric minimal surfaces'' of index 0 or 1. Inspired by Rubinstein's view of topological notions of minimal surfaces, Bachman \cite{Bachman} defined a topological index for surfaces in 3-manifolds in terms of the compressing discs for those 3-manifolds. 

In a different direction, Thompson considered cycles in the 1-skeleton of a triangulated 2--sphere. She identified certain ``shortening moves'' as the combinatorial analogue of both the geometric notion of creating a parameterized family of curves and the topological notion of a compressing disc for a surface in the 3-manifold. More precisely, in Thompson's work, if $\mc{T}$ is a (simplicial) triangulation of a surface and if $\gamma$ is a path in the 1-skeleton of $\mc{T}$, then a triangle $T \in \mc{T}$ is a \defn{shortening move} for $\gamma$ if two adjacent edges of $\gamma$ are two sides $a,b$ of $T$ and $\gamma$ does not contain the third side $c$ of $T$. See Figure \ref{fig:pathtriangulateddisc} for an example. Replacing $a,b$ in $\gamma$ with $c$ produces a new path $\gamma'$ containing one less edge than $\gamma$. For Thompson, a \defn{stable closed geodesic} is a cycle in the 1-skeleton of $\mc{T}$ which does not have a shortening move and which is not the boundary of a single triangle. A \defn{closed unstable geodesic}, on the other hand, is a cycle $\gamma$ in the 1-skeleton which has shortening moves, at least one on each side of $\gamma$, but for which any two shortening moves on opposite sides of $\gamma$ share an edge. The path in Figure \ref{fig:pathtriangulateddisc} is an unstable geodesic in the triangulated sphere obtained by doubling the disc along its boundary. Thompson uses a technique called ``thin position'' to produce stable and unstable closed geodesics in $\mc{T}$.

\begin{figure}
\centering
\includegraphics[scale=0.5]{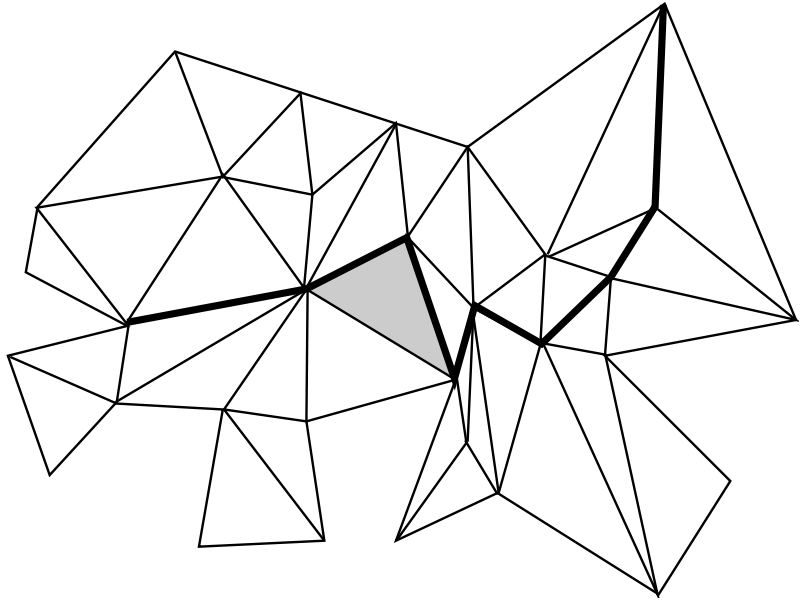}
\caption{A path in a triangulated disc is indicated with bold lines. The shaded triangle is a shortening move for the path, as replacing the two edges of the path in the triangle with the edge of the path not in the triangle creates a new path containing fewer edges.}
\label{fig:pathtriangulateddisc}
\end{figure}

The main point of this paper is to generalize Thompson's ideas to weighted pseudomanifolds $(M, \weight)$ of all dimensions. An $n$-dimensional simplicial manifold where each $(n-1)$-face is given a weight $\weight = 1$, is an example of a weighted pseudomanifold. If $M$ is a triangulated Riemannian manifold, it is also natural to consider the weight function to be the $(n-1)$-volume of each $(n-1)$-dimensional face. In our context, however, the weight function need not have any actual geometric meaning attached to it. Additionally, all that matters for our construction is that the space $M$ is made up of $n$-dimensional spaces glued together along $(n-1)$-dimensional subspaces in their boundaries. 

Our work can also be seen as a version of Johnson's work \cite{Johnson} using thin position techniques to create a clustering algorithm.  Johnson considers graphs with weighted edges and uses thin position to produce subsets of the vertices (called ``pinch clusters'') that are more connected to each other than to vertices not in the subset. (See \cite{BHJ} for an improved version.) We can convert our setting to that of Johnson's by considering the dual graph to the $n$-dimensional simplices in $M$. That is, consider the graph where each vertex corresponds to an $n$-dimensional simplex and two vertices are joined by an edge with weight $w$ if they share an $(n-1)$-dimensional face of weight $w$. Johnson considers only the significance of the vertex subsets corresponding to our stable minimal surfaces. Since we also are interested in unstable minimal surfaces, we prefer to develop the theory from first principles in a way analogous to \cite{Thompson}. 

Our main technique, derived from \cite{Johnson} and \cite{Thompson}, is called ``thin position.'' Thin position was first defined in the setting of knots in the 3-sphere by Gabai \cite{Gabai}. Gabai used thin position to show that the local maxima of a certain function (called \emph{width}) had certain desirable properties, analogous to those of an unstable minimal surface. Thompson \cite{Thompson97} later showed that the global minimum of width also has certain desirable properties, analogous to those of a stable minimal surface. Thin position has since been adapted to numerous other settings by other authors. The most significant adaptation is likely that of Scharlemann and Thompson \cite{SchTh} who defined thin position for a 3-manifold. They show that the local minima and local maxima of width have analogous nice properties. 

Scharlemann and Thompson's idea is the main antecedent for the ideas in this paper. They consider handle structures on 3-manifolds and associate an ordering (of sorts) to each handle structure. An ordering is \emph{thinned} by swapping the order of a 3-dimensional 1-handle and 2-handle, so that the 2-handle is attached before the 1-handle. In our setting, the top-dimensional simplices of the pseudomanifold play the role of the handles in a handle decomposition. As the simplices are attached, the boundary of all the cells attached up to a certain point forms a level surface. The attachment of some simplices cause the weight of the level surface to increase and others to decrease. In contrast to the analogous situation with knots and 3-manifolds, some of the attachments may also cause the weight of the level surface to remain the same. Our basic aim (again following \cite{Johnson} and \cite{Thompson}) will be to swap adjacent cells in the ordering so that, if possible, the weight of the sublevel surface decreases before it increases.

 \section{Combinatorial minimal surfaces}\label{Combinatorial Minimal Surfaces}
 
 \begin{definition}\label{def: pseudomanifold}
An $n$-dimensional \defn{pseudomanifold} is a finite $n$-dimensional simplicial complex such that:
\begin{enumerate}
\item  it is the union of its $n$-simplices
\item (the \defn{pure} condition) each $(n-1)$-dimensional face is incident to exactly two $n$-dimensional faces
\item (the \defn{strongly connected} condition) Let $\Gamma$ be the graph having  a vertex for each $n$-dimensional simplex and with an edge joining two vertices if and only if the corresponding simplices share an $(n-1)$-face. Then $\Gamma$ is connected.
\end{enumerate}
\end{definition}

For most of this paper, only the pure condition plays a significant role; consequently, we can work in greater generality. We will consider $n$-dimensional spaces built out of $n$-dimensional manifolds that we call ``$n$-bricks.'' The boundary of these bricks are subdivided into ``facets'' and bricks are glued along facets to make ``brick complexes.'' In what follows, we will often use $T$ to refer to an $n$-brick, inspired by the solid (T)riangles of a triangulated surface. The edges of the triangle are its facets.

 More formally, a single point (i.e. a 0-dimensional ball) is a \defn{$0$-brick}. For $n \geq 1$, an \defn{$n$-brick} is a compact $n$-dimensional manifold $T$ with $\boundary T$ tiled by finitely many $(n-1)$-dimensional bricks, called \defn{facets}. The recursive nature of this definition allows us to refer to the \defn{faces} of an $n$-brick. For instance, an $(n-2)$-dimensional face is a facet of one of the facets in the tiling of the boundary of the $n$-brick. 

It's most natural to consider the case when an $n$-brick is a compact $n$-ball, but other situations may also be useful. A \defn{gluing map} from an $n$-brick $T_1$ to a \emph{distinct} $n$-brick $T_2$ is a homeomorphism from the union $X$ of some facets of $T_1$ to the union $Y$ of some facets of $T_2$ such that the restriction of the map to each face in $X$ takes that face to a face in $Y$. A \defn{brick complex } $(M,\mc{T})$ consists of a topological space $M$, a finite collection of $n$-bricks $\mc{T}$ together with gluing maps such that:
 \begin{enumerate}
 \item For each $n$-brick $T \in \mc{T}$ and each facet $F$ of $T$, there is at most one gluing map $\phi$ such that $T$ is either in the domain or range of $\phi$.
 \item  $M$ is homeomorphic to the quotient space obtained by gluing together the $n$-brick in $\mc{T}$ using the gluing maps.
 \end{enumerate}

\begin{remark}
We will often refer to a brick complex  $(M,\mc{T})$ using only $M$. This obscures not only the gluing maps, but also bricks. However, in this paper, no confusion should result. We also refer to the images in $M$ of the $n$-bricks as \defn{bricks} and the images in $M$ of the facets as \defn{facets}.
\end{remark} 

Natural examples of brick complexes include pseudomanifolds, and their generalizations pure simplicial complexes and pure polytopal complexes \cite{Ziegler} that are the union of their $n$-cells. Unlike in some definitions of \emph{pseudomanifold} we do not allow a cell to be incident to itself along a facet; allowing that in our setting would not change much, but adds some additional complication. Most of the important features of our work are easily visualized in the case when $n \in \{2,3\}$ and $(M,\mc{T})$ is a triangulated surface or 3-manifold.

\begin{example}
A compact 2-dimensional manifold with a triangulation is a brick complex. Each solid triangle is a brick and the edges of the triangulation are facets.
\end{example}

\begin{example}
A compact 3-manifold with a triangulation is a brick complex. Each solid tetrahedron is a brick and the faces of the triangulation are the facets.
\end{example}

\begin{example}
A non-compact finite-volume hyperbolic 3-manifold having an ideal triangulation such that no ideal tetrahedron has two of its faces identified produces a brick complex when each cusp is collapsed to a point.
\end{example}

\begin{example}
A compact 2-dimensional manifold with a pants decomposition is a brick complex, where each pair of pants is a brick and the cuffs of a pair of pants are the facets.
\end{example}

Let $\mc{T}$ denote the set of bricks in $M$ and $\mc{F}$ the set of facets in $M$. We denote by $\boundary M$ the set of facets adjacent to a single brick. If $A$ is a brick, we let $\boundary_I A$ denote the facets (called the \defn{interior facets}) of $A$ which do not lie in $\boundary M$. 

A \defn{surface} $S \subset M$ is the union of facets. We refer to the facets of $M$ belonging to $S$ as facets \defn{in} $S$.  The $(n-2)$-dimensional faces of $M$ belonging to $S$ are facets \defn{of} $S$. Observe that since facets are $(n-1)$-bricks, it would be natural to consider $S$ as a brick complex, except that more than two $(n-1)$-bricks may meet along a facet of $S$. A surface is \defn{proper} if no facet in $S$ is also a facet in $\boundary M$ and if for every facet $\alpha$ \emph{of} $S$ that does not lie in $\boundary M$, there are an even number of facets \emph{in} $S$ containing $\alpha$ as a facet. Informally, we can imagine $S$ as an immersed surface (in the topological sense) passing transversely through itself at $\alpha$.

If $S_1$ and $S_2$ are surfaces, we let $S_1 \sqcap S_2$ denote the facets in $S_1$ and $S_2$ common to both. We let $S_1 \setminus S_2$ be the facets in $S_1$ that are not facets of $S_2$. For a brick $A$ and a proper surface $S$, we let $\boundary_S A$ be the union of the facets of $\boundary_I A$ in $S$ and $\boundary'_S A$ the union of the facets of $\boundary_I A$ not in $S$.

As a combinatorial proxy for area, we will consider weight systems on brick complexes. A \defn{weight system} for a brick complex $M$ is a function $\weight \co \mc{F} \to [0,\infty) \subset \R$. If $S$ is the union of facets in $M$, we define the \defn{weight} $\weight(S)$ of $S$ to be the sum of the weights of the facets in $S$. 

\begin{example}
When $M$ is a compact 2-dimensional Riemannian manifold with a triangulation, it is natural to take the weight function to be the length of the facets (i.e. edges) in the boundary of each cell. Similarly, if $M$ is a simplicial or polytopal complex, it is natural to take the weight function to be the $(n-1)$-dimensional volume of each facet. Alternatively, we could take the weight of each facet simply to be equal to 1.
\end{example}

\begin{example}
When $M$ is a 2-dimensional Riemannian manifold with a pants decomposition, it is natural to take the weight function to be the length of each cuff of each pair of pants.
\end{example}

\begin{example}
Suppose that $\tau$ is a regular train track (see \cite{PH}) on a compact 2-dimensional manifold with a transverse measure. Let $N(\tau)$ be the fibered neighborhood of $\tau$. Decompose $N(\tau)$ into bricks by using fibers based at the midpoints of each edge. Each brick is a 2-dimensional disc whose boundary has been partitioned into arcs, which are the facets. Three of the arcs correspond to the fibers used in the decomposition; give those facets a weight corresponding to the measure of that branch of the train track. The other edges (i.e. facets) belong to the horizontal and vertical boundary of $N(\tau)$. 
\end{example}

\begin{example}
Figure \ref{Fig:samplebrickcplx} depicts a cell decomposition of a disc. Giving each edge a weight $\weight = 1$, we have a brick complex.
\end{example}

\begin{figure}[ht]
\labellist
\small\hair 2pt
\pinlabel {$1$} at 127 322
\pinlabel $2$ at  258 318
\pinlabel $3$ at 136 276
\pinlabel $4$ at 35 257
\pinlabel $5$ at 90 188
\pinlabel $6$ at 93 107
\pinlabel $8$ at 43 37
\pinlabel $9$ at 137 17
\pinlabel $10$ at 329 310
\pinlabel $11$ at 266 185
\pinlabel $12$ at 360 166
\pinlabel $13$ at 226 80
\pinlabel $14$ at 312 59
\pinlabel $15$ at 200 45
\pinlabel $16$ at 266 29
\pinlabel $7$ at 124 55
\pinlabel $17$ at 168 35
\endlabellist
\centering
\includegraphics[scale=0.5]{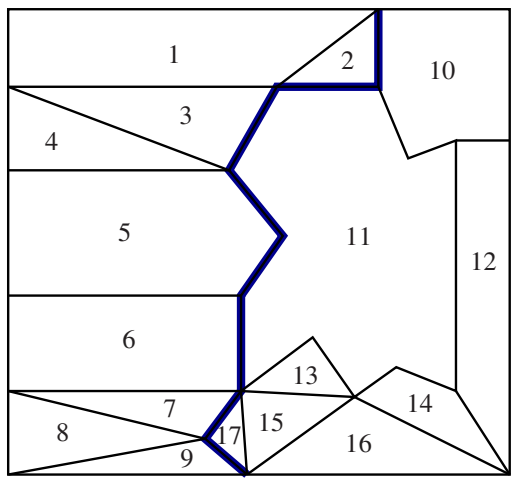}
\caption{An example of a cell complex homeomorphic to a disc. Giving each edge length 1 we have a brick complex. We have numbered each brick for convenience. The thick, blue path is an example of a proper surface $S$ of weight $\weight(S) = 8$.}
\label{Fig:samplebrickcplx}
\end{figure}

In what follows, we only require that the weights be non-negative; they need not arise from any geometric considerations. We call $(M, \omega)$ a \defn{weighted brick complex}. We will not be varying $\weight$; henceforth we denote a weighted brick complex simply by $M$.

\begin{definition}
The \defn{strength} of a brick $A$ relative to a surface $S$ is
\[
\sigma(A;S) = \weight(\boundary'_S A) - \weight(\boundary_S A).
\]
We can form a new surface $S_A$ by replacing $\boundary_S A \subset S$ with $\boundary'_S A$. We say that $A$ is a \defn{variation} for $S$ and that $S_A$ is obtained by \defn{varying} $A$. If $A \sqcap S \neq \nil$ and if $\sigma(A; S) \leq 0$, then $A$ is a \defn{shortening move} for $S$. If the inequality is strict, $A$ is a \defn{strict} shortening move for $S$.
\end{definition}

\begin{example}
Consider the brick complex $M$ in Figure \ref{Fig:samplebrickcplx}. The strength of brick 2 with respect to the highlighted surface (i.e. path) $S$ is $\sigma(2;S) = -1$ since it has two interior facets (i.e. edges) in $S$ and one interior facet that is not in $S$. The strength of brick 11 with respect to $S$ is $\sigma(11;S) =  7 - 5 = 2$. Varying $S$ across brick number 2 creates a surface of weight 7. Varying $S$ across brick number 11 creates a surface of weight 10. Varying $S$ across brick number 14 creates a disconnected surface of weight 12.
\end{example}

Observe that if $A$ is a variation of $S$, then $\weight(S_A) = \weight(S) + \sigma(S;A)$.  Thus, $A$ is a strict shortening move if and only if $\weight(S_A) < \weight(S)$. Notice also that if $S = \boundary \Delta$ for some brick $\Delta \subset M$, then $\Delta$ is a shortening move on $S$ and the empty surface results from shortening $S$ using $\Delta$. 

\begin{remark}
Johnson calls his version of $\sigma(A;S)$ (for graphs with weighted edges), the \emph{slope} of $A$ \cite{Johnson}.
\end{remark}

\begin{lemma}\label{still proper}
If $A$ is a variation of a proper surface $S$, then $S_A$ is a proper surface.
\end{lemma}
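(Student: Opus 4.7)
The plan is to verify both defining conditions of a proper surface directly for $S_A = (S \setminus \boundary_S A) \cup \boundary'_S A$; note that the union is disjoint since the facets in $\boundary'_S A$ are, by definition, not in $S$.

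The ``no boundary facets'' condition is immediate: facets from $S \setminus \boundary_S A$ lie in $S$, hence avoid $\boundary M$ by properness of $S$, while facets in $\boundary'_S A$ lie in $\boundary_I A$ and so avoid $\boundary M$ by definition of $\boundary_I$.

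The real work is the parity condition. Fix an $(n-2)$-face $\alpha$ of $S_A$ with $\alpha \not\subset \boundary M$, and let $k(X)$ denote the number of facets in a set $X$ that contain $\alpha$. I split into cases based on whether $\alpha$ is an $(n-2)$-face of the brick $A$. If $\alpha$ is not a face of $A$, then no facet of $A$ contains $\alpha$, so the substitution of $\boundary'_S A$ for $\boundary_S A$ is invisible to $\alpha$; we get $k(S_A) = k(S)$, and the surviving facets of $S_A$ through $\alpha$ all lie in $S$, so $\alpha$ is also a face of $S$ and properness of $S$ makes $k(S)$ even. The main case is when $\alpha$ is a face of $A$. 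The key geometric input is that each $(n-2)$-face of a brick lies in exactly two of its facets: since $\boundary A$ is a closed $(n-1)$-manifold tiled by the facets of $A$, a small neighborhood of an interior point of $\alpha$ inside $\boundary A$ is an $(n-1)$-ball, and $\alpha$ cuts it into two half-discs sitting in two distinct facets of $A$. Moreover, $\alpha \not\subset \boundary M$ forces both of these facets to lie in $\boundary_I A$, since a facet in $\boundary M$ through $\alpha$ would place $\alpha$ inside $\boundary M$. Hence $k(\boundary_I A) = 2$, and because $\boundary_S A$ and $\boundary'_S A$ partition $\boundary_I A$,
\[
k(S_A) \;=\; k(S) - k(\boundary_S A) + k(\boundary'_S A) \;=\; k(S) + 2 - 2\,k(\boundary_S A),
\]
which has the same parity as $k(S)$; and $k(S)$ is even (either because $\alpha$ is not a face of $S$, so $k(S) = 0$, or because $\alpha \not\subset \boundary M$ and $S$ is proper).

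The only real obstacle is extracting the ``exactly two facets through $\alpha$'' claim rigorously from the brick-complex axioms, since a priori a facet of $A$ could in principle touch $\alpha$ along more than one component of its boundary; once this local/manifold fact is pinned down the rest is routine parity bookkeeping.
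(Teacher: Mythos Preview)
Your proof is correct and follows essentially the same approach as the paper's: verify the boundary condition directly, then check parity at each $(n-2)$-face $\alpha$ by splitting on whether $\alpha$ meets $\boundary_I A$, using that exactly two facets of $\boundary_I A$ contain such an $\alpha$. Your single formula $k(S_A) = k(S) + 2 - 2k(\boundary_S A)$ neatly packages what the paper handles as three subcases (both, one, or neither of the two facets lying in $S$), and you are somewhat more explicit than the paper about why the ``exactly two'' claim holds, invoking that $\boundary A$ is a closed $(n-1)$-manifold where the paper simply asserts that $\boundary_I A$ is a proper surface.
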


\begin{proof}
Assume that $S$ is a proper surface and that $A \subset M$ is a brick. By definition, no facet in $S$ or $\boundary_I A$ lies in $\boundary M$. Thus, no facet in $S_A$ lies in $\boundary M$.

Consider a facet $\alpha$ of $S_A$ (therefore, an $(n-2)$-brick) such that $\alpha$ does not lie in $\boundary M$. We will show that the number of facets in $S_A$, $\deg(\alpha; S_A)$,  containing $\alpha$ is even. Since $S$ is proper, $\deg(\alpha;S)$ is even (possibly 0). If $\alpha$ does not belong to $\boundary_I A$, then we are done, so suppose that $\alpha$ lies in $\boundary_I A$. Observe that $\boundary_I A$ is a proper surface tiled by facets. Thus, there exist precisely two facets $F$ and $F'$ in $\boundary_I A$ containing $\alpha$. If both $F$ and $F'$ lie in $S$, then $\deg(\alpha;S_A) = \deg(\alpha;S) - 2$ is still even. If precisely one of $F$ and $F'$ lies in $S$, then that facet does not lie in $S_A$, but the other one does. Hence, $\deg(\alpha;S_A) = \deg(\alpha;S)$ is still even. If neither $F$ nor $F'$ lie in $S$, then $\deg(\alpha;S_A) = \deg(\alpha;S) + 2$, and so $\deg(\alpha;S_A)$ is still even. Thus, $S_A$ is a proper surface.
\end{proof}

We now come to the central definition of this paper.

\begin{definition}\label{def:min surf}
A proper surface $S \subset M$ is:
\begin{itemize}
\item a \defn{stable minimal surface} if $S$ has no strict shortening moves,
\item an \defn{unstable minimal surface} if there is a partition of the bricks of $M$ into two non-empty sets $\mc{A}$ and $\mc{B}$ such that:
\begin{enumerate}
\item If $A \in \mc{A}$ and $B \in \mc{B}$, then $\boundary A \sqcap \boundary B \subset S$
\item Each of $\mc{A}$ and $\mc{B}$ contain a shortening move for $S$ and one of them contains a strict shortening move;
\item For each strict shortening move $A \in \mc{A}$ (resp. $B \in \mc{B}$), there exists a shortening move $B \in \mc{B}$ (resp. $A \in \mc{A}$) such that 
\[
2\weight(\boundary A \sqcap \boundary B) \geq |\sigma(A;S)| + |\sigma(B;S)|.
\]
\item For all strict shortening moves $A \in \mc{A}$ and strict shortening moves $B \in \mc{B}$,
\[
2\weight(\boundary A \sqcap \boundary B) \geq |\sigma(A;S)| + |\sigma(B;S)|.
\]
\end{enumerate}
\end{itemize}
\end{definition} 

In the context where $S$ is a smooth separating surface in a smooth manifold, the sets $\mc{A}$ and $\mc{B}$ correspond to the cells on either side of $S$. In Heegaard splitting theory, the sets $\mc{A}$ and $\mc{B}$ correspond to the sets of compressing discs on either side of a bicompressible (e.g. Heegaard) surface.

\begin{example}
In Figure \ref{Fig:samplebrickcplx}, the highlighted path $S$ is not a stable minimal surface since bricks 2 and 17 are strict shortening moves for $S$. Note also that brick 5 is a shortening move, but not a strict shortening move. These are the only shortening moves for $S$. Suppose that we have a partition of the bricks of $M$ into two nonempty sets $\mc{A}$ and $\mc{B}$ and that, without loss of generality, brick 2 is an element of $\mc{A}$. Since brick 2 shares no facets with either brick 5 or brick 17, $S$ is not an unstable minimal surface. After varying $S$ across both brick 2 and brick 17, we obtain a stable minimal surface $S'$. This is easily verified as all bricks other than 5 and 11 share at most one facet with $S'$. Brick 5 is a non-strict shortening move for $S'$ and Brick 11 is not a shortening move for $S'$.
\end{example}

\begin{figure}[ht!]
\centering
\includegraphics[scale=0.45]{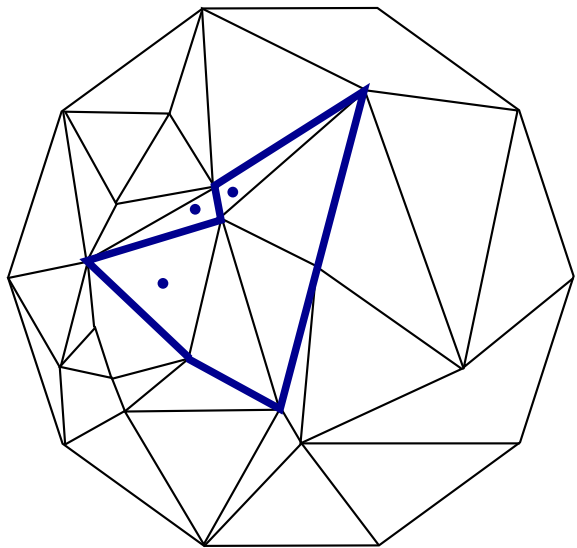}
\caption{A triangulated disc. If each edge is given weight 1, the blue curve is an example of an unstable minimal surface. There are three shortening moves, marked with dots.}
\label{fig:disctrian}
\end{figure}

\begin{example}
Figure \ref{fig:disctrian} shows a triangulated disc with a blue curve $S$. Take the sets $\mc{A}$ and $\mc{B}$ to consist of the triangles on the two sides of the curve and let $A$ and $B$ be the two of the marked shortening moves, chosen to be on opposite sides of the curve. Then 
\[
2\weight(\boundary A \sqcap \boundary B) = 2 = |\sigma(A;S)| + |\sigma(B;S)|.
\]
so Definition \ref{def:min surf} implies $S$ is an unstable minimal surface.
\end{example}

To better motivate this definition, consider a proper surface $S$ with two variations $A$ and $B$ such that $F = \boundary A \sqcap \boundary B \subset S$. Let $S_1 = S_A$ and $S_2 = S_B$. Then we say that $S_1, S, S_2$ is a \defn{variation sequence} for $S$. If $S$ is a stable minimal surface, then $S$ is a local minimum of $\weight$ for every variational sequence of $S$. 

Similarly, consider the surfaces
\[
\begin{array}{rcl}
S_{AB} &=& (S_A)_B\\
S_{BA} &=& (S_B)_A
\end{array}
\]
be the surfaces obtained by varying $S$ using $A$ and then $B$ and by varying $S$ using $B$ and then $A$. See Figure \ref{Fig:doublevariation} for an example where the surface is a curve in a triangulated disc.  Notice that $S_{AB} = S_{BA}$. We have
\[
\weight(S_{AB}) = \weight(S) + \sigma(A;S) + \sigma(B;S) + 2\weight(F).
\]
Suppose that $A$ and $B$ are both shortening moves for $S$. Then both $\sigma(A;S)$ and $\sigma(B;S)$ are non-positive. In which case $\weight(S_{AB}) < \weight(S)$ if and only if 
\[
|\sigma(A;S)| + |\sigma(B;S)| > 2\weight(F).
\]
Hence, if $S$ is an unstable minimal surface then $\weight(S_{AB}) \geq \weight(S)$. That is, $S$ is a local minimum of $\weight$ when varying using two variations.

\begin{figure}[ht]
\labellist
\small\hair 2pt
\pinlabel {$A$} at 127 588
\pinlabel {$B$} at 154 618
\pinlabel{Vary using $A$} [b] at 500 558
\pinlabel{Vary using $A$} [b] at 500 176
\pinlabel{Vary using $B$} [l] at 206 374
\pinlabel{Vary using $B$} [r] at 791 374
\pinlabel{$F$} [b] at 47 692
\endlabellist
\centering
\includegraphics[scale=0.45]{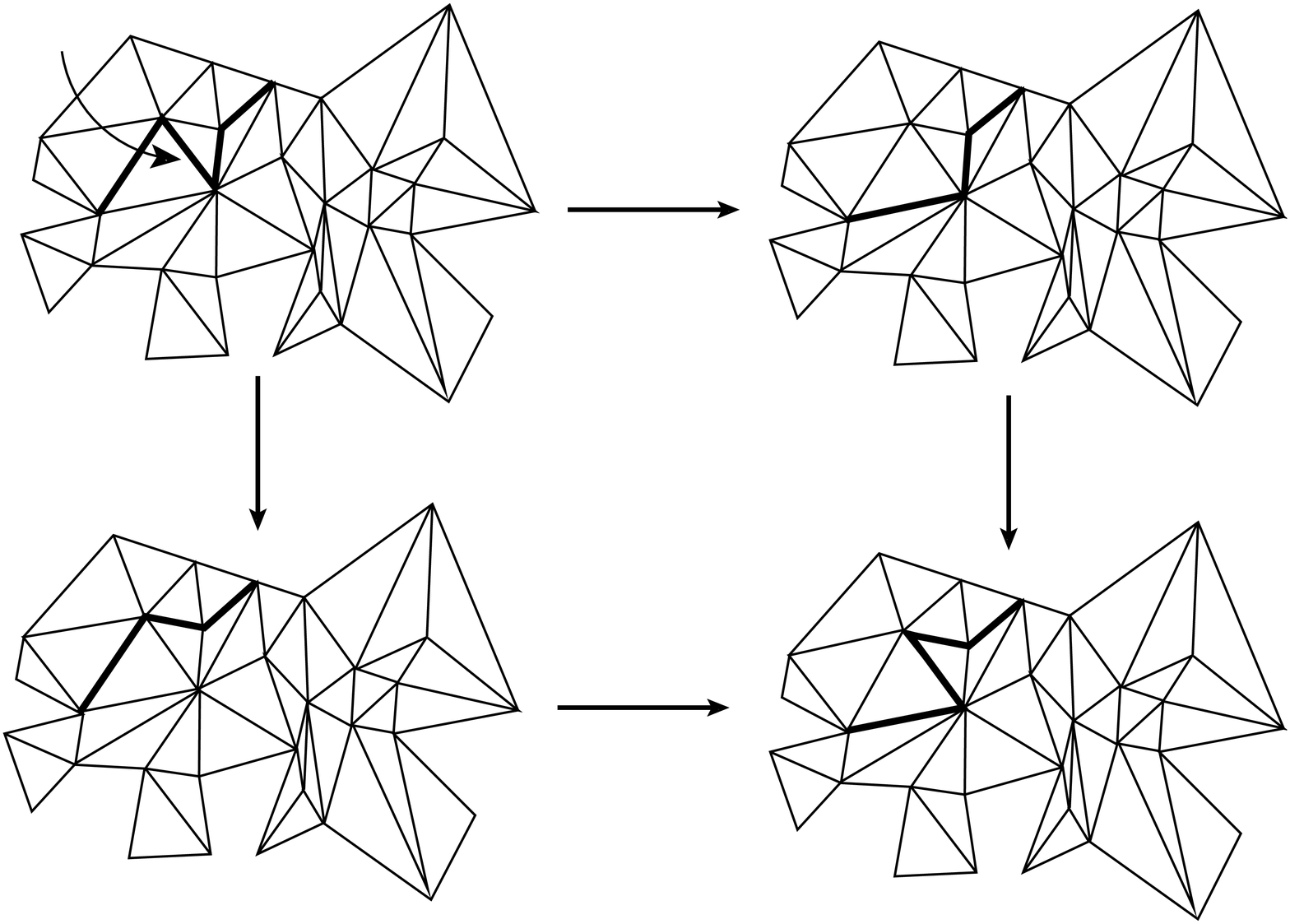}
\caption{Varying a surface (that is, a curve) in 2-dimensions using two variations. We can use arbitrary weights (not shown), but if we assign each edge a weight of 1, we have the following computations. The original curve has weight 4. The intersection $F = A \sqcap B$ is the edge shared by $A$ and $B$. Varying across either $A$ or $B$ produces a curve of weight 3, varying across both results in a curve of length 4. With respect to the original curve, both $A$ and $B$ have strength $-1$. After varying across $A$ or $B$, the strength of each of $A$ and $B$ is $+1$ with respect to the new curve. }
\label{Fig:doublevariation}
\end{figure}

\section{Orderings and Local Thin Position}\label{orderings}
 
 For $N \in \N\cup \{0\}$, we let $[N] = \{1, \hdots, N\}$ and $[N]^* = [N] \cup \{0\}$. Suppose that $M$ is a weighted brick complex with $N = |\mc{T}|$. An \defn{ordering} of $M$ is a function $\mc{O} \co \mc{T} \to [N]$. If the ordering $\mc{O}$ is clear from context, we let $T_i = \mc{O}^{-1}(i)$. Adapting terminology from Morse theory, for $j \in [N]^*$ we let 
 \[
 M_j = \bigcup\limits_{i \leq j} T_i
 \] 
be the \defn{sublevel set} of $\mc{O}$ at height $j$. Notice, $M_0 = \nil$. We let $M_j^C$ be the union of all the bricks $T_i$ with $i > j$. The \defn{level set} $S_j$ at height $j$ consists of all facets $F$ of $M_j \cap M^C_j$.   Let $\Lambda(j) = \weight(S_j)$. In the example from Figure \ref{Fig:samplebrickcplx}, the union of the bricks to the left of $S$ is the sublevel set at height 9, with $S = S_{9}$ being the level set at height 9.

The next lemma is where we use the fact that $M$ is a brick complex and not a general cell complex.
\begin{lemma}
For all $j \in [N]$, $S_j$ is obtained by varying $S_{j-1}$ across $T_j$. In particular, we have
\[
\Lambda(j) - \Lambda(j-1) = \sigma(T_j ;S_{i-1}).
\]
Also, each surface $S_j$ is a proper surface.
\end{lemma}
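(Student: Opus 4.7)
The plan is to prove all three conclusions at once by a direct bookkeeping argument that compares the facets of $S_{j-1}$ and $S_j$ brick by brick, followed by a one-line induction for properness.

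First I would unwind the definitions: a facet $F$ of $M$ lies in $S_k$ precisely when $F$ is an interior facet (so $F \not\subset \boundary M$) of $M$ and the two bricks meeting along $F$ are split by the level, i.e.\ one lies in $M_k$ and the other in $M_k^C$. (Boundary facets of $M$ are adjacent to only one brick, so they cannot appear in any level set.) With this characterization, I would classify every interior facet of $M$ according to which pair of classes in the partition $\{T_1,\hdots,T_{j-1}\} \sqcup \{T_j\} \sqcup \{T_{j+1},\hdots,T_N\}$ its two adjacent bricks belong to. There are essentially five cases, and for each I would check whether the facet is in $S_{j-1}$, in $S_j$, in both, or in neither.

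The key observation is that membership in $S_{j-1}$ and $S_j$ agree for any facet not touching $T_j$, so the symmetric difference $S_{j-1} \triangle S_j$ is contained in $\boundary_I T_j$. The facets of $\boundary_I T_j$ shared with a brick of $M_{j-1}$ are exactly those in $S_{j-1}$ and not in $S_j$; by definition these comprise $\boundary_{S_{j-1}} T_j$. The facets of $\boundary_I T_j$ shared with a brick of $M_j^C$ are exactly those in $S_j$ and not in $S_{j-1}$; by definition these comprise $\boundary'_{S_{j-1}} T_j$. Thus
\[
S_j \;=\; \bigl(S_{j-1}\setminus \boundary_{S_{j-1}} T_j\bigr) \cup \boundary'_{S_{j-1}} T_j,
\]
which is precisely the surface obtained by varying $S_{j-1}$ across $T_j$. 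Taking weights and using that these two sets of facets are disjoint yields
\[
\Lambda(j) - \Lambda(j-1) \;=\; \weight(\boundary'_{S_{j-1}} T_j) - \weight(\boundary_{S_{j-1}} T_j) \;=\; \sigma(T_j; S_{j-1}).
\]

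For properness, I would induct on $j$. The base case $S_0 = \nil$ is vacuously a proper surface. For the inductive step, assuming $S_{j-1}$ is proper, the first part of the lemma expresses $S_j$ as a variation of $S_{j-1}$, so Lemma \ref{still proper} gives that $S_j$ is proper.

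The only mild subtlety is ensuring that facets of $M$ lying in $\boundary M$ are excluded from all $S_k$ and do not contaminate the case analysis; this follows immediately from the definition of level set as facets of $M_j \cap M_j^C$, both of which are subcomplexes of $M$, together with the fact that a boundary facet of $M$ has only one adjacent brick. I do not expect a serious obstacle beyond keeping the partition of facets straight.
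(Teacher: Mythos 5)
Your proposal is correct and follows essentially the same route as the paper: compare the facets of $S_{j-1}$ and $S_j$ to see that they differ exactly by trading $\boundary_{S_{j-1}}T_j$ for $\boundary'_{S_{j-1}}T_j$, identify this as a variation across $T_j$, and deduce properness from Lemma \ref{still proper} by induction on $j$. Your version just makes the facet bookkeeping more explicit than the paper's terse statement (and silently corrects the paper's typo $\sigma(T_j;S_{i-1})$ to $\sigma(T_j;S_{j-1})$).
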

\begin{proof}
Notice that $S_0 = \nil$ is a proper surface with $\weight(S_0) = 0$. For $j \in [N]$, $M_j$ is obtained by including $T_j$ into $M_{j-1}$. Each facet of $\boundary_I T_j$ which is not also a facet of $M_{j-1}$ thus lies in $S_j$. Likewise, any facet of $\boundary_I T_j$ which does lie in $M_{j-1}$ lies in $S_{j-1}$ but not in $S_j$. Hence, $S_j$ is obtained by varying $S_{j-1}$ across $T_j$. The result follows from Lemma \ref{still proper}.
\end{proof}

A number $t \in [N-1]$ is a \defn{(local) maximum} for $\mc{O}$ if there exist $t_-, t_+ \in [N]^*$ such that $t_- < t < t_+$ and 
\begin{enumerate}
\item $\Lambda(t_-) < \Lambda(t_-+1)$, 
\item $\Lambda(t_+) < \Lambda(t_+-1)$, and
\item $\Lambda(t) = \Lambda(k)$ for all $k \in \{t_- +1, \hdots, t_+ -1\}$.
\end{enumerate}
We define \defn{(local) minima} similarly, reversing the inequalities in (1) and (2). The maximum or minimum is \defn{extremal} if $t = t_- +1$ or $t = t_+ -1$. 

\begin{example}
In the Example from Figure \ref{Fig:samplebrickcplx}, the level sets at heights 9 and 11 both have weight 8, while the level set at height 10 has weight 10, so 10 is a local maximum for the ordering.
\end{example}

Observe that $\Lambda(\nil) = \Lambda(N) = 0$ since both $S_0$ and $S_N$ are empty. Consequently, we refer to ``maxima'' and ``minima'' rather than ``local maxima'' and ``local minima.'' Also observe that if at least one weight on an interior facet is positive, then every ordering will have a maximum. 

\begin{definition}[Thompson]
The \defn{width} $\width(\mc{O})$ of an ordering $\mc{O}$ is the sequence whose terms are the values of $\Lambda$ at the maxima of $\mc{O}$, arranged in non-increasing order (with repetitions). Widths of orderings are compared lexicographically.
\end{definition}

\begin{remark}
Our definition of width is due to Thompson \cite{Thompson}. In \cite{Johnson}, Johnson defines \emph{width} using all the values of $\Lambda$, not only the maxima.  In that paper, he is primarily concerned with the properties of local minima of width. Using Thompson's width allows us, like Thompson, to consider both local maxima and local minima.
\end{remark}

In the remainder of the section, we consider the effect of interchanging two bricks in an ordering. We establish some notation. In the group of permutations of $[N]$, let $\tau_i$ be the transposition interchanging $i$ and $i+1$, for $i < N$. Suppose $\mc{O}$ is an ordering and that $i \in [N-1]$. Let $\mc{O}' = \tau_i \circ \mc{O}$. Set $A = \mc{O}^{-1}(i)$ and $B = \mc{O}^{-1}(i+1)$. Let $F = \boundary A \sqcap \boundary B$. For any $j$, let $S_j$ and $S'_j$ be the level sets at height $j$ for $\mc{O}$ and $\mc{O}'$ respectively. Observe that $S'_j = S_j$ whenever $j \leq i-1$ or $j\geq i+1$. The surface $S'_i$ is the variation of $S_{i-1}$ along $B$ and $S_{i+1}$ is the variation of $S'_i$ along $A$ and of $S_i$ along $B$. Let $\Lambda'$ be the function used to define width for $\mc{O}'$. 

Observe that:
\[
\Lambda'(i) = \Lambda(i) - \sigma(A;S_{i-1}) + \sigma(B;S_{i-1}).
\]

Since $\boundary_{S_{i+1}} B = \boundary_{S_{i-1}} B \cup F$, and since $F \sqcap S_{i-1} = \nil$, we have
\[
\sigma(B;S_i) = \sigma(B;S_{i-1}) - 2\weight(F).
\]
Observe also that
\[
\sigma(A;S_i) = - \sigma(A;S_{i-1})
\]
since each facet of $\boundary_I A$ belongs either to $S_{i-1}$ or to $S_i$. Thus,
\[
\Lambda'(i) = \Lambda(i) + \sigma(A;S_i) + \sigma(B;S_{i}) + 2\weight(F).
\]

As in Figure \ref{Fig: swaplemma}, there are times when swapping the order of two adjacent bricks can decrease width.

\begin{lemma}[The swap lemma]\label{swapping lem}
If 
\[
\sigma(A;S_i) + \sigma(B;S_{i}) + 2\weight(F) \leq  0 \hspace{.3in} (*)
\]
then $\width(\mc{O}') \leq \width(\mc{O})$. Furthermore, if $i$ is a maximum for $\mc{O}$ and if $(*)$ is strict, then $\width(\mc{O}') < \width(\mc{O})$.
\end{lemma}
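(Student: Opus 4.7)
The key step is the computation done just before the statement: $\Lambda'$ and $\Lambda$ agree at every index $j\neq i$, while
\[
\Lambda'(i)-\Lambda(i) \;=\; \sigma(A;S_i)+\sigma(B;S_i)+2\weight(F),
\]
so condition $(*)$ is equivalent to the single inequality $\Lambda'(i)\le\Lambda(i)$, with strictness preserved. The lemma therefore reduces to the purely combinatorial statement that lowering the value of $\Lambda$ at a single interior index does not increase width, and strictly decreases it if the lowered index is itself a maximum of $\mc{O}$ and the drop is strict.

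Set $v=\Lambda(i)$ and $v'=\Lambda'(i)\le v$, and dispose of the trivial case $v'=v$ (in which $\Lambda=\Lambda'$). Assuming $v'<v$, the plan is first to localize the region of possible disagreement: any local maximum of $\mc{O}$ with value strictly greater than $v$ has its defining plateau and witnessing indices disjoint from $\{i\}$ (since $\Lambda(i)=v$ is not strictly greater than $v$), so it persists as a maximum of $\mc{O}'$ with the same value; the symmetric argument for $\mc{O}'$ yields a value-preserving bijection between the maxima above $v$ in the two orderings. Consequently the initial segments of $\width(\mc{O})$ and $\width(\mc{O}')$ strictly above $v$ coincide, and the lex comparison is decided at height $\le v$.

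For the ``furthermore'' clause, suppose $i$ is a maximum of $\mc{O}$ lying on a topped plateau $P$ at height $v$. Lowering $\Lambda$ at $i$ splits $P$ into two (possibly empty) subplateaus $P_L,P_R$ at height $v$, each of which is still topped strictly on both sides in $\mc{O}'$ (the new drop from $v$ to $v'$ provides the strict topping at the $i$-end), so together they contribute $|P|-1$ occurrences of $v$ to $\width(\mc{O}')$ versus $|P|$ in $\width(\mc{O})$. The index $i$ itself either becomes an isolated maximum of $\mc{O}'$ of value $v'<v$ or contributes no maximum at all; in either case no new contribution at height $v$ appears, and the first differing entry of the width tuples is at height $v$ with $\width(\mc{O}')$ short one copy, giving the strict lex decrease.

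The main obstacle, and the heart of the general argument when $i$ is not a maximum of $\mc{O}$, is a similar but more delicate plateau-fragmentation analysis for the non-maximum plateaus of $\mc{O}$ passing through $i$ at height $v$: such a plateau may be ``topped'' on only one side in $\mc{O}$ (with the other side rising to a strictly higher value), in which case one of its fragments in $\mc{O}'$ may acquire full maximum status precisely because of the new drop at $i$. The plan is to organize a case analysis by the signs of $\Lambda(i-1)-v$ and $\Lambda(i+1)-v$, use the invariance of the width above $v$ established in the previous step, and verify that any newly created maxima in $\mc{O}'$ fit into the width sequence without producing a net lex increase.
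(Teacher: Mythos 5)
Your reduction to a ``purely combinatorial'' statement is where the argument breaks. It is not true in general that lowering the value of $\Lambda$ at a single interior index cannot increase width. Consider a sequence of values $\Lambda = (0,5,3,3,2,7,0)$: the maxima are at indices $1$ and $5$ with values $5$ and $7$, so the width is $(7,5)$. Now lower $\Lambda(2)$ from $3$ to $1$, giving $\Lambda' = (0,5,1,3,2,7,0)$. The index $3$ becomes a new maximum (of value $3$), so the width jumps to $(7,5,3)$, which is lexicographically larger. This is exactly the scenario you flag in your last paragraph (``one of its fragments in $\mc{O}'$ may acquire full maximum status precisely because of the new drop at $i$''), but the verification you propose cannot succeed, because the statement you reduced to is simply false.

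What is missing is that $\Lambda'(i)$ is not an arbitrary number below $\Lambda(i)$: the identities $\sigma(A;S_i) = \Lambda(i-1)-\Lambda(i)$, $\sigma(B;S_i) = \Lambda(i+1)-\Lambda(i)$, and $\Lambda'(i) = \Lambda(i) + \sigma(A;S_i) + \sigma(B;S_i) + 2\weight(F)$ together with $\weight(F)\ge 0$ force
\[
\Lambda'(i) \;\ge\; \Lambda(i-1)+\Lambda(i+1)-\Lambda(i),
\]
a lower bound that rules out precisely the pathology above (in that example the bound would demand $\Lambda'(2)\ge 5$, not $1$). The paper's proof uses this constraint in the form $\sigma(B;S_{i-1}) = \sigma(B;S_i)+2\weight(F)\ge \sigma(B;S_i)$ (and the dual for $A$) to derive a contradiction whenever $i\pm 1$ would become a new maximum for $\mc{O}'$ without having been one for $\mc{O}$. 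You need to import that inequality into your framework before the ``newly created maxima'' step can be closed; as written, the case analysis by the signs of $\Lambda(i\pm 1)-\Lambda(i)$ has no way to exclude the bad case. The plateau bookkeeping in your ``furthermore'' paragraph is fine, but the core of the lemma is this constrained drop, not an unconstrained one.
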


\begin{figure}[ht!]
\labellist
\small\hair 2pt
\pinlabel {$i$} at 128 391
\pinlabel {$i+1$} at 210 391

\pinlabel {$i$} at 800 391
\pinlabel {$i+1$} at 720 391

\pinlabel {$i$} at 332 125
\pinlabel {$i+1$} at 137 90

\pinlabel {$i$} at  727 82
\pinlabel {$i+1$} at 925 117
\endlabellist
\includegraphics[scale=0.43]{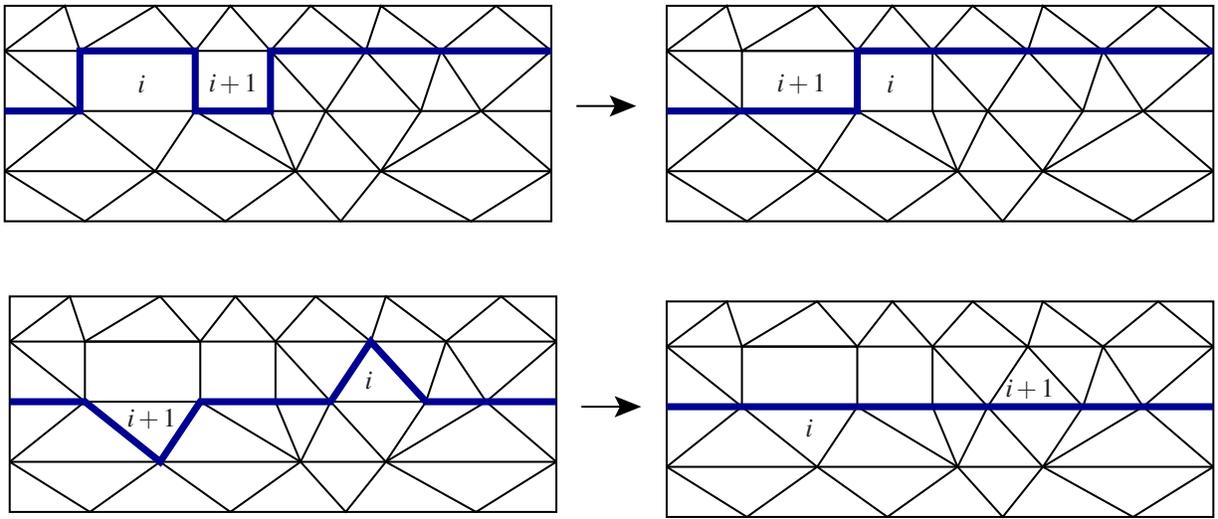}
\caption{Consider the pictured cell decomposition of a disc. Give each edge weight 1. Choose orderings of the bricks on the top left and bottom left so the thick blue lines are sublevel sets at height $i$ and so that the bricks at height $i$ and $i+1$ are as indicated. In both the top and bottom row performing a swap (obtaining the orderings pictured on the right) decreases the weight of the sublevel set at height $i$. In both cases, the swap strictly decreases width, since for the original orderings $i$ was a local maximum and inequality (*) was strictly satisfied.}
\label{Fig: swaplemma}
\end{figure}

The proof is similar to other proofs we will encounter. The basic idea is as follows. We begin by showing that any index $i$ which is a maximum for both $\mc{O}$ and $\mc{O}'$ has $\Lambda'(i) \leq \Lambda(i)$. We then consider $i-1$ and $i+1$. Each of these may be a maximum for $\mc{O}$ but not for $\mc{O}'$; if so, it does not contribute to increased width. Each may also be a maximum for $\mc{O}'$ but not for $\mc{O}$. If such is the case, we show that $i$ is a maximum for $\mc{O}$ but not for $\mc{O}'$ and $\Lambda(i \pm 1) < \Lambda(i)$. We can then conclude that $\width(\mc{O}') \leq \width(\mc{O})$. 

\begin{proof}
Assume that (*) holds. If $(*)$ is an equality, then $\Lambda'(i) = \Lambda(i)$ and so $\Lambda'  = \Lambda$. The fact that $\width(\mc{O}') = \width(\mc{O})$ follows immediately.

Assume, therefore, that $\Lambda'(i) < \Lambda(i)$. If $i$ is a maximum for both $\mc{O}$ and $\mc{O}'$, we obtain $\width(\mc{O}')$ from $\width(\mc{O})$ by removing (one copy of) $\Lambda(i)$ and replacing it with at least one copy of $\Lambda'(i)$. It may, in fact be replaced with more than one copy, as it may now be the case that $\Lambda(i\pm1) = \Lambda'(i)$, in which case $\Lambda(i\pm1)$ are now maxima. In any case, we have replaced $\Lambda(i)$ with one or more integers strictly smaller than it and so $\width(\mc{O}') < \width(\mc{O})$. 

Suppose now that $i$ is a maximum for $\mc{O}$, but not for $\mc{O}'$. In this case we remove $\Lambda(i)$ from $\width(\mc{O})$ in the process of forming $\width(\mc{O}')$. We need to consider the possibility though that $i-1$ or $i+1$ is now a maximum for $\mc{O}'$. If $i\pm 1$ was not a maximum for $\mc{O}$, then $\Lambda(i\pm 1) < \Lambda(i)$, as $i$ is a maximum for $\mc{O}$. Consequently, even if $i\pm 1$ was not a maximum for $\mc{O}$ but is for $\mc{O}'$, we again have $\width(\mc{O}') < \width(\mc{O})$, as desired.

Finally, assume that $i$ is not a maximum for $\mc{O}$. Since $\Lambda'(i) < \Lambda(i)$, the index $i$ is not a maximum for $\mc{O}'$, either. Suppose, therefore, that $i\pm 1$ is also not a maximum for $\mc{O}$, but is for $\mc{O}'$. Since it is a maximum for $\mc{O}'$, there exists $j < i-1$ such that $\Lambda(j) < \Lambda(i-1)$ and $\Lambda$ is constant on the interval $(j,i-1]$. Since $i-1$ is not a maximum for $\mc{O}$, we must have $\Lambda(i) \geq \Lambda(i-1)$. Since $i$ is not a maximum for $\mc{O}$, we must have $\Lambda(i+1) \geq \Lambda(i)$. Thus, $\sigma(B;S_i) \geq 0$.

Since $i-1$ is a maximum for $\mc{O}'$ and $i$ is not a maximum for $\mc{O}'$, we have $\Lambda'(i) < \Lambda(i-1)$. Hence, $\sigma(B;S_{i-1}) < 0$. But then
\[
0 > \sigma(B;S_{i-1}) = \sigma(B;S_i) + 2\weight(F) \geq 0,
\]
a contradiction. Hence, if $i-1$ is a maximum for $\mc{O}'$ then it for $\mc{O}$, as well.

Now suppose that $i+1$ is a maximum for $\mc{O}'$ and not for $\mc{O}$. As in the previous argument, we can conclude that $\Lambda(i) \geq \Lambda(i+1)$ and that $\Lambda(i-1) \geq \Lambda(i)$. This implies that $\sigma(A;S_{i-1}) \leq 0$.  Also, we have that $\Lambda'(i) < \Lambda(i+1)$, and so $\sigma(A;S'_i) > 0$. Therefore, we have
\[
0 < \sigma(A;S'_i) = \sigma(A;S_{i-1}) - 2 \weight(F)  \leq 0,
\]
a contradiction. Thus, if $i+1$ is a maximum for $\mc{O}'$ then it is also a maximum for $\mc{O}$. Hence, $\width(\mc{O}') \leq \width(\mc{O})$.
\end{proof}

\begin{remark}\label{share a facet}
Observe that, as a consequence of Lemma \ref{swapping lem}, if $A$ and $B$ are shortening moves for $S_i$ and if they do not share any facets, then (*) holds and so $\width(\mc{O}') \leq \width(\mc{O})$. 
\end{remark}

\begin{definition}
If $\width(\mc{O}') \leq \width(\mc{O})$ where $\mc{O}' = \tau_i \circ \mc{O}$ for some $i \in [N-1]$, then we say that $\mc{O}$ \defn{thins} to $\mc{O}'$ and that $\tau_i$ is a \defn{legal swap}. We extend the terminology so that ``thins to'' is reflexive and transitive. Hence, if $\mc{O}$ thins to $\mc{O}'$, then there exists a sequence of legal swaps converting $\mc{O}$ to $\mc{O}'$ such that width is non-increasing under each of the transpositions.
\end{definition}

\begin{definition}\label{def: loc thin}
An ordering $\mc{O}$ is \defn{locally thin} if it does not thin to any ordering with strictly smaller width. It is \defn{thin} if there is no ordering of strictly smaller width.
\end{definition}

\subsection{Delaying and Advancing}

As Thompson observes in her context, there are times when it is advantageous to delay or advance the position of a brick in an ordering. We present two useful lemmas. They correspond to Thompson's \emph{reordering principle} \cite{Thompson}. They are similar to the principle in the thin position of knot theory that a maximum for a knot can be pushed higher and a minimum can be pushed lower. See Figure \ref{Fig: delaylemma} for an example in 2-dimensions.
\begin{figure}[ht!]
\labellist
\small\hair 2pt
\pinlabel {1} at 21 18
\pinlabel {2} at 74 31
\pinlabel 3 at 143 18
\pinlabel 4 at 200 31
\pinlabel 5 at 263 18
\pinlabel 6 at 304 31
\pinlabel 7 at 351 18
\pinlabel 8 at 418 29
\pinlabel 9 at 477 18
\pinlabel {1} at 619 18
\pinlabel {2} at 664 31
\pinlabel 3 at 737 18
\pinlabel 4 at 795 31
\pinlabel 5 at 860 18
\pinlabel 6 at 899 31
\pinlabel 7 at 952 18
\pinlabel 8 at 1017 29
\pinlabel 9 at 1073 18
\pinlabel 10 at 21 88
\pinlabel 11 at 74 68
\pinlabel 12 at 123 88
\pinlabel 13 at 200 68
\pinlabel 14 at 227 88
\pinlabel 15 at 267 86
\pinlabel 16 at 297 68
\pinlabel 17 at 335 86
\pinlabel 18 at 394 90
\pinlabel 19 at 425 68
\pinlabel 20 at 478 88

\pinlabel 10 at 617 88
\pinlabel 11 at 663 68
\pinlabel 12 at 719 88
\pinlabel 13 at 778 68
\pinlabel 14 at 824 88
\pinlabel 15 at 861 86
\pinlabel 16 at 892 68
\pinlabel 17 at 930 86
\pinlabel 18 at 985 90
\pinlabel 19 at 1024 68
\pinlabel 20 at 1075 88

\pinlabel 22 [b] at 260 103
\pinlabel 23 [b] at 335 103
\pinlabel 24 [b] at 403 103
\pinlabel 25 [b] at 478 103

\pinlabel 21 [b] at 855 103
\pinlabel 22 [b] at 928  103
\pinlabel 23 [b] at 998 103
\pinlabel 24 [b] at 1071 103

\pinlabel {21} at 123 126
\pinlabel {26} at 209 129
\pinlabel {25} at 719 129
\pinlabel {26} at 802 129

\endlabellist
\includegraphics[scale=0.43]{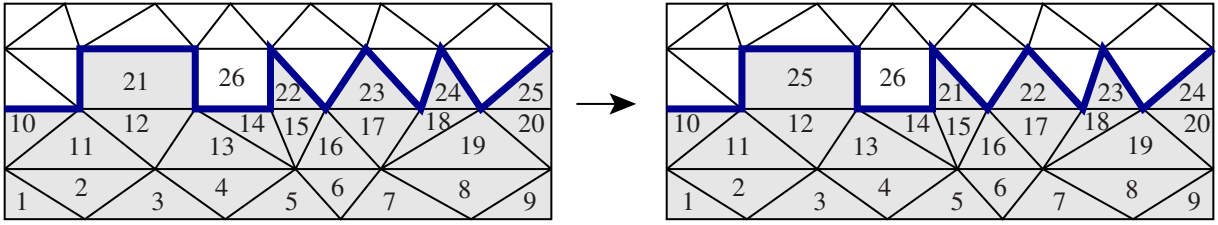}
\caption{Consider the pictured cell decomposition of a disc. Give each edge weight 1. Suppose that we have an ordering $\mc{O}$ whose first 26 triangles are as indicated. The thick blue path is the level set at height 25.  Observe that there is no local maximum between height 21 and 25. We can create a new ordering $\mc{O}'$ where we delay the brick that was at height 21 to be at height 25, shift the bricks originally lying at a height between 21 and 25, and keep all other bricks at the same height. In this example, we have not changed the heights or weights of any local maxima. We could now apply a swap the bricks at $\mc{O}'$ heights 25 and 26 to reduce the width of the ordering.}
\label{Fig: delaylemma}
\end{figure}

\begin{lemma}[The Delaying Lemma]\label{Lem:Delaying}
Let $j \in [N]$ and let  $i < j$. For the brick $A = T_i$, let $\boundary_j A = \boundary A \sqcap S_j$ and $\boundary'_j A= \boundary_I A \setminus \boundary_j A$. Suppose that
\[
\weight(\boundary_j A) > \weight(\boundary'_j A) \hspace{.3in} (\dag)
\]
Then $\mc{O}$ thins to the ordering $\mc{O}' = (i \thickspace j \thickspace j-1 \thickspace \cdots \thickspace i+1) \circ \mc{O}$.
\end{lemma}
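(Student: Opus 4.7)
The plan is to realize the cyclic permutation $\pi = (i\;j\;j{-}1\;\cdots\;i{+}1)$ as the product $\tau_{j-1}\tau_{j-2}\cdots\tau_i$ of adjacent transpositions, producing intermediate orderings $\mc{O} = \mc{O}^{(0)}, \mc{O}^{(1)}, \ldots, \mc{O}^{(n)} = \mc{O}'$ with $n = j-i$ and $\mc{O}^{(k+1)} = \tau_{i+k}\circ\mc{O}^{(k)}$. In $\mc{O}^{(k)}$ the brick $A = T_i$ sits at position $i+k$ while the bricks $T_{i+1},\ldots,T_{i+k}$ have shifted down to positions $i,\ldots,i+k-1$. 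I will prove by induction on $k$ that each swap $\tau_{i+k}$ is legal, so that $\width(\mc{O}^{(k+1)}) \le \width(\mc{O}^{(k)})$.

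First I compute the level functions. The sublevel set of $\mc{O}^{(k)}$ at height $m$ equals $M_{m+1}\setminus A$ for $i\le m<i+k$ and $M_m$ otherwise. Since $S_{m+1}$ is the variation of $S^{(k)}_m$ across $A$, and since adding $A$ flips the $S$-status of every facet in $\partial_I A$, we obtain $\sigma(A;S^{(k)}_m) = -\sigma(A;S_{m+1})$ and hence
\[
\Lambda^{(k)}(m) = \Lambda(m+1) + \sigma(A;S_{m+1}) \quad\text{for } i\le m<i+k,
\]
while $\Lambda^{(k)}(m) = \Lambda(m)$ outside this range. Next I extract the sign of $\sigma(A;S_m)$ from the hypothesis. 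For $m \ge i$, the brick $A$ already lies in $M_m$, so attaching $T_{m+1}$ flips only those facets of $\partial_I A$ in $\partial A\sqcap\partial T_{m+1}$ from $\partial_S A$ to $\partial'_S A$, yielding $\sigma(A;S_{m+1}) = \sigma(A;S_m) + 2\weight(\partial A\sqcap\partial T_{m+1})$. Thus $m \mapsto \sigma(A;S_m)$ is non-decreasing on $\{i,\ldots,j\}$, and combined with $\sigma(A;S_j) = \weight(\partial'_j A) - \weight(\partial_j A) < 0$ from $(\dag)$, I conclude $\sigma(A;S_m) < 0$ throughout that interval.

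Finally I verify each swap is legal. The swap $\tau_{i+k}$ modifies $\Lambda^{(k)}$ only at position $i+k$, replacing $\Lambda(i+k)$ with $\Lambda^{(k+1)}(i+k) = \Lambda(i+k+1) + \sigma(A;S_{i+k+1})$. The strict negativity of $\sigma(A;S_{i+k+1})$ gives the key sandwich
\[
\Lambda^{(k+1)}(i+k) < \Lambda(i+k+1) = \Lambda^{(k+1)}(i+k+1).
\]
If additionally $\Lambda^{(k+1)}(i+k) \le \Lambda(i+k)$, the hypothesis $(*)$ of Lemma \ref{swapping lem} holds for $\tau_{i+k}$ applied to $\mc{O}^{(k)}$ and we are done. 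Otherwise $\Lambda(i+k) < \Lambda^{(k+1)}(i+k) < \Lambda(i+k+1)$, so in particular $\Lambda(i+k) < \Lambda(i+k+1)$; when $k \ge 1$ the first step above also gives $\Lambda^{(k)}(i+k-1) = \Lambda(i+k) + \sigma(A;S_{i+k}) < \Lambda(i+k)$, and so on the triple $(i+k-1, i+k, i+k+1)$ both $\Lambda^{(k)}$ and $\Lambda^{(k+1)}$ are strictly increasing. Consequently no maximum plateau of either function can contain position $i+k$, the functions agree elsewhere, and so they have identical local maxima; for $k=0$ the same sandwich shows that any local maximum of $\mc{O}$ at position $i-1$ either survives in $\mc{O}^{(1)}$ with its value unchanged or is destroyed, with no new maximum of larger value created.

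The main obstacle is the second case of the last step, where the sufficient condition $(*)$ of Lemma \ref{swapping lem} fails and the Swap Lemma alone does not control the width. The strict inequality in $(\dag)$ is what confines $\Lambda^{(k+1)}(i+k)$ strictly below $\Lambda(i+k+1)$, and one must then argue by hand, in the spirit of the proof of Lemma \ref{swapping lem}, that this squeeze forbids the creation of any new local maximum at the swap position or at its neighbors.
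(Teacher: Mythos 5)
Your proposal is essentially correct and follows the same strategy as the paper: decompose the cyclic permutation into adjacent transpositions and show each one is a legal swap, using the Swap Lemma when its hypothesis $(*)$ holds and a direct comparison of maxima when it does not. Your observation that $m\mapsto\sigma(A;S_m)$ is non-decreasing on $\{i,\ldots,j\}$ (hence strictly negative there by $(\dag)$) neatly packages the same information the paper obtains by noting that $(\dag)$ persists after each swap, and your ``sandwich'' is the same pair of inequalities the paper derives for the single swap.

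The one place you should tighten up is the $k=0$ case. You assert that ``the same sandwich'' shows a maximum of $\mc{O}$ at position $i-1$ either survives or is destroyed, but you never establish the left half of the sandwich there: the formula $\Lambda^{(k)}(i+k-1)=\Lambda(i+k)+\sigma(A;S_{i+k})$ only applies for $k\geq 1$, so for $k=0$ you need a separate argument that $\Lambda(i-1)<\Lambda(i)$. This is available from facts you already have: since $S_i=(S_{i-1})_A$, one has $\sigma(A;S_{i-1})=-\sigma(A;S_i)>0$, and hence $\Lambda(i)-\Lambda(i-1)=\sigma(A;S_{i-1})>0$. With this in hand, position $i-1$ is never a maximum of $\mc{O}$ in the first place, and both $\Lambda$ and $\Lambda^{(1)}$ are strictly increasing on the triple $(i-1,i,i+1)$ in the ``otherwise'' subcase, so the $k\geq 1$ argument applies verbatim. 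Your phrasing ``either survives or is destroyed, with no new maximum of larger value created'' gestures at the right conclusion but is not a proof; once you insert $\Lambda(i-1)<\Lambda(i)$ the case analysis becomes uniform in $k$ and the hand-waving disappears.
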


\begin{proof}
Let $B = T_{i+1}$ and $F = A \sqcap B$. Since $M$ is a brick complex, $F$ does not share any facet with $S_j$. It also shares no facet with $S_{j-1}$. Thus, $\boundary_{S_{i-1}} A \cup F \subset \boundary'_j A$. Similarly, each facet of $\boundary_j A$ is not a facet of $S_{i-1}$. Thus, $\boundary_j A \subset \boundary'_{S_{i-1}} A \setminus F $. Consequently,
\[\begin{array}{rcl}
0 &<& \weight(\boundary_j A) -  \weight(\boundary'_j A) \\
&\leq& \weight(\boundary'_{S_{i-1}} A) - \weight(F) - \weight(\boundary_{S_{i-1}} A - \weight(F) \\
&=& \sigma(A; S_{i-1}) - 2\weight(F)
\end{array}
\]
Hence,
\[
\sigma(A;S_i) + \sigma(B;S_{i}) + 2\weight(F) = - \weight(A;S_{i-1}) + \sigma(B;S_{i}) + 2\weight(F) < \sigma(B;S_i)
\]
If $i$ is a maximum for $\mc{O}$, then $\sigma(B;S_i) \leq 0$, in which case by Lemma \ref{swapping lem}, $\width(\mc{O}') < \width(\mc{O})$, as desired. 

Suppose that $i$ is not a maximum for $\mc{O}$. Since $\sigma(A;S_{i-1}) > 2\weight(F) \geq 0$, we have $\Lambda(i) > \Lambda(i-1)$. Furthermore,
\[
\sigma(A;S'_i) = \sigma(A; S_{i-1}) - 2 \weight(F) > 0.
\]
Thus, $\Lambda(i+1) > \Lambda'(i)$. Hence, $i$ is not a maximum for $\mc{O}'$ either. 

Suppose, for the moment, that $\Lambda(i+1)$ is a maximum for $\mc{O}'$. Since $\sigma(A;S_{i-1}) > 0$, we have $\Lambda(i) > \Lambda(i-1)$. Since $i$ is not a maximum for $\mc{O}$, then $\Lambda(i+1) > \Lambda(i)$.  Hence, $i+1$ is a maximum for $\mc{O}$ as well.

Suppose that $i-1$ is a maximum for $\mc{O}'$. We will derive a contradiction. Thus, $\Lambda'(i) \leq \Lambda(i-1)$. If equality holds, then $i$ would also be a maximum for $\mc{O}'$, which it is not. Thus, $\Lambda'(i) < \Lambda(i-1)$. Thus, $\sigma(B;S_{i-1}) < 0$. Hence, $\sigma(B;S_i) < 0$. This implies $\Lambda(i+1) < \Lambda(i)$. Since $\sigma(A;S_{i-1}) > 0$, we also have $\Lambda(i) > \Lambda(i-1)$. But this implies that $i$ is a maximum for $\mc{O}$, a contradiction. Thus, $i-1$ is not a maximum for $\mc{O}'$. 

We conclude that $\width(\mc{O}') = \width(\mc{O})$ when $i$ is not a maximum for $\mc{O}$. Hence, $\mc{O}$ thins to $\mc{O}'$ as desired.

If $i < j - 1$, then performing the swap does not change the fact that ($\dag$) holds, and so we may continue performing swaps, thinning the ordering each time, until $A$ is at height $j$.
\end{proof}

We also need a version of Lemma \ref{Lem:Delaying} which allows us to advance bricks in the ordering. We could adapt the previous proof, but instead we introduce the reverse ordering. It is similar to turning a Morse function upside down.

\begin{definition}
Suppose that $\mc{O}\co \mc{F} \to [N]$ is an ordering. The \defn{reverse ordering} $r\mc{O}$ is defined by 
\[
r\mc{O}(k) = \mc{O}(N - k + 1)
\]
for each $k \in [N]^*$. 
\end{definition}

Observe that the level surface for $\mc{O}$ at height $k$ is equal to the level surface for $r\mc{O}$ at height $n-k$.  Thus, the widths of $\mc{O}$ and $r\mc{O}$ are the same. 

\begin{lemma}[The Advancing Lemma]\label{Lem:Advancing}
Let $i \in [N-2]$ and let  $j+1 < i$. For the brick $A = T_i$, let $\boundary_j A = \boundary A \sqcap S_j$ and $\boundary'_j A= \boundary_I A\setminus \boundary_j A$. Suppose that
\[
\weight(\boundary_j A) > \weight(\boundary'_j A) \hspace{.3in} (\dag)
\]
Then $\mc{O}$ thins to the ordering $\mc{O}' = ( j \thickspace j+1 \thickspace j+2 \thickspace \cdots \thickspace i) \circ \mc{O}$.
\end{lemma}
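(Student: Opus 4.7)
The plan is to reduce the Advancing Lemma to the Delaying Lemma (Lemma \ref{Lem:Delaying}) by applying it to the reverse ordering $\tild{\mc{O}} := r\mc{O}$ and then pulling the conclusion back through the reversal. The underlying intuition is that reversal swaps the roles of ``early'' and ``late'' in an ordering, so advancing a brick in $\mc{O}$ is the same operation as delaying the corresponding brick in $r\mc{O}$.

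First, I would establish two basic compatibility properties of reversal. \emph{Level-set duality:} for each $k \in [N]^*$, one has $S^{r\mc{O}}_k = S^{\mc{O}}_{N-k}$, since the sublevel set of $r\mc{O}$ at height $k$ is the complement (in $M$) of $M^{\mc{O}}_{N-k}$ and the two sides share the same boundary surface. \emph{Transport of thinning:} if $\mc{O}_2 = \tau_k \circ \mc{O}_1$, then $r\mc{O}_2 = \tau_{N-k} \circ r\mc{O}_1$, because conjugation by the involution $f(x) = N-x+1$ satisfies $f\tau_k f^{-1} = \tau_{N-k}$; combined with the already-noted fact that $\width(r\mc{O}) = \width(\mc{O})$, this shows legal swaps transport along $r$, and hence ``thins to'' does as well.

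Second, I would translate hypothesis $(\dag)$ into the hypothesis of the Delaying Lemma for $\tild{\mc{O}}$. In $\tild{\mc{O}}$ the brick $A$ sits at height $\tild{i} := N - i + 1$. Choosing $\tild{j}$ so that the relevant $\tild{\mc{O}}$-level set coincides (via level-set duality) with $S^{\mc{O}}_j$, the sets $\boundary_{\tild{j}} A$ and $\boundary'_{\tild{j}} A$ computed in $\tild{\mc{O}}$ coincide with $\boundary_j A$ and $\boundary'_j A$ computed in $\mc{O}$. Condition $(\dag)$ is then exactly the Delaying Lemma's hypothesis for $\tild{\mc{O}}$ with parameters $\tild{i}, \tild{j}$, and the index condition $\tild{i} < \tild{j}$ becomes $j + 1 < i$, matching the Advancing Lemma's assumption.

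Third, apply Lemma \ref{Lem:Delaying} to $\tild{\mc{O}}$ to obtain that $\tild{\mc{O}}$ thins to $\tild{\mc{O}}' := (\tild{i}\ \tild{j}\ \tild{j}{-}1\ \cdots\ \tild{i}{+}1) \circ \tild{\mc{O}}$. By transport of thinning, $\mc{O}$ then thins to $r\tild{\mc{O}}'$. It remains to identify $r\tild{\mc{O}}'$ with the target ordering $(j\ j{+}1\ \cdots\ i) \circ \mc{O}$, which amounts to computing the conjugate $f\sigma f^{-1}$ of the Delaying cycle $\sigma$; this conjugate is obtained by replacing each entry of $\sigma$ by its image under $f$, and unwinding cycle notation produces the Advancing cycle on $\{j, j{+}1, \ldots, i\}$.

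The main obstacle is the final index bookkeeping: tracking every occurrence of $N$, $i$, and $j$ through the reversal and matching the conjugated cycle to the one in the statement. The high-level idea---reversal converts delaying into advancing---is clean, but the detailed identification is where care is needed, and any off-by-one in the choice of $\tild{j}$ will immediately corrupt the final cycle.
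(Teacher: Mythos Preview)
Your proposal is correct and follows essentially the same approach as the paper: both reduce to the Delaying Lemma by passing to the reverse ordering $r\mc{O}$, translating the indices via $i' = N-i+1$ and $j' = N-j$, and then pulling the thinning conclusion back through the reversal. The paper's version is terser on the conjugation bookkeeping (it checks one transposition and the level-set/width compatibility rather than the full cycle), but the strategy is identical.
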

\begin{proof}
Let $B = T_{i-1}$. Consider the reverse ordering $r\mc{O}$. Let $T'_k = r\mc{O}^{-1}(k)$. Set $i' = N - i + 1$ and $j' = N - j$. Thus, $(i'+1) = N - (i-1) + 1$.  Hence, $A = T'_{i'}$ and $B = T'_{i'+1}$. Also, 
\[
r\big( \tau_{i-1} \circ \mc{O}\big) = \tau_{i'} \circ r\mc{O}.
\]

Let $S'_k = S_{N - k}$ be the level surface for $r\mc{O}$ at height $k$. Thus, $S'_{j' - 1} = S_{j}$ By hypothesis,
\[
\weight(\boundary_{S'_{j'}} A) > \weight(\boundary'_{S'_{j'}} A).
\]
Since $j + 1 \leq i-1$, we have $N - j - 1 \geq N - i + 1$. Hence, $j' = N - j \geq N-i + 2 = i' + 1$. By Lemma \ref{Lem:Delaying}, we have
\[
\width\big(\tau_{i'} \circ r \mc{O}\big) \leq \width\big(r\mc{O}\big).
\]
Hence,
\[
\width(\mc{O}') \leq \width(\mc{O}),
\]
as desired.
\end{proof}

\begin{remark}
Observe how the thinning sequences produced by the proofs of Lemmas  \ref{Lem:Delaying} and \ref{Lem:Advancing} do not move any bricks not between heights $i$ and $j$ in the ordering.
\end{remark}

\section{Local thin orderings and minimal surfaces}
 
Our main theorem states that we can use locally thin orderings to produce stable and unstable minimal surfaces. Its simplicity should be contrasted with existence results for minimal surfaces in differential geometry. It extends \cite[Theorem 6]{Thompson} to brick complexes of arbitrary dimension. 

\begin{theorem}\label{Main Theorem}
Suppose that $(M, \omega)$ is a weighted brick complex. Let $\mc{O}$ be a locally thin ordering. The following hold:
\begin{enumerate}
\item If $j$ is a maximum of $\mc{O}$, then $S_j$ is either a stable or an unstable minimal surface. If $j$ is an extremal maximum then $S_j$ is an unstable minimal surface.
\item If $j$ is a minimum of $\mc{O}$, then $ S_j$ is a stable minimal surface.
\end{enumerate}
\end{theorem}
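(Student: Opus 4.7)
The plan is to prove both parts by contradiction: in each case, assuming $S_j$ fails to be the asserted type of minimal surface, I use the Delaying/Advancing Lemmas to reposition offending shortening moves next to height $j$, and then invoke the Swap Lemma to strictly decrease the width, contradicting local thinness of $\mc{O}$.

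For part (2), suppose $j$ is a minimum and $S_j$ admits a strict shortening move $A$ with $i = \mc{O}(A)$. I first dispose of the boundary cases: if $i = j$, then $\sigma(A;S_{j-1}) = -\sigma(A;S_j) > 0$ forces $\Lambda(j) > \Lambda(j-1)$, contradicting that $j$ is a minimum, and the case $i = j+1$ is immediate from $\sigma(A;S_j) = \Lambda(j+1) - \Lambda(j) < 0$. Passing to the reverse ordering if $i > j+1$, assume $i < j$. Since $A \subset M_j$, the strict shortening inequality is exactly hypothesis $(\dag)$ of Lemma \ref{Lem:Delaying}, so delaying $A$ to position $j$ yields $\mc{O}'$ with $\width(\mc{O}') \leq \width(\mc{O})$. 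A direct computation gives $\Lambda'(j-1) = \Lambda(j) + \sigma(A;S_j) < \Lambda(j) = \Lambda'(j)$, so $j$ has become (at least) an extremal maximum of $\mc{O}'$; a subsequent swap between positions $j$ and $j+1$ then strictly decreases width, as the hypothesis $(*)$ can be verified using the strict shortening of $A$.

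For part (1), let $j$ be a maximum and set $\mc{A} = \{T : \mc{O}(T) \leq j\}$, $\mc{B} = \{T : \mc{O}(T) > j\}$. Condition (1) of the unstable definition holds by construction since facets shared between bricks on opposite sides of height $j$ lie in $S_j$. Because $j$ is a maximum, $T_j \in \mc{A}$ and $T_{j+1} \in \mc{B}$ are both (possibly non-strict) shortening moves for $S_j$; moreover if $j$ is extremal, say $j = t_- + 1$, then $\sigma(T_j;S_{j-1}) > 0$ forces $T_j$ to be a strict shortening move, so $S_j$ cannot be stable and only the unstable alternative is available. To verify conditions (3) and (4), suppose a strict shortening $A \in \mc{A}$ and a shortening $B \in \mc{B}$ violate the weight inequality, equivalently $\sigma(A;S_j) + \sigma(B;S_j) + 2\weight(\boundary A \sqcap \boundary B) < 0$. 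Using Lemmas \ref{Lem:Delaying} and \ref{Lem:Advancing}, delay $A$ to position $j$ and advance $B$ to position $j+1$; the level surface at height $j$ is preserved, $S_j$ remains unchanged, and width does not increase. In the resulting ordering $j$ is still a maximum (in fact extremal, by computing $\Lambda$ on either side using the strict shortening), so Swap Lemma applied with the strict inequality $(*)$ strictly decreases width, contradicting local thinness.

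The main obstacle is the bookkeeping around the maxima of $\Lambda$ across reorderings: since Lemmas \ref{Lem:Delaying} and \ref{Lem:Advancing} only provide weak width inequalities, the strict decrease must come from a subsequent Swap Lemma application, which requires $j$ to remain a maximum in the repositioned ordering and the hypothesis $(*)$ to hold strictly there. Establishing both simultaneously, while carefully handling plateaus, extremality, and non-strict shortening moves, is the delicate technical step and drives the case analysis.
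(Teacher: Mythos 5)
Your high-level strategy matches the paper: reposition offending shortening moves adjacent to the critical height using Lemmas \ref{Lem:Delaying} and \ref{Lem:Advancing}, then invoke Lemma \ref{swapping lem} to contradict local thinness. However, there are two concrete errors in the details.

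For part (2), you delay $A$ to the \emph{minimum} position $j$ and assert that $j$ ``has become (at least) an extremal maximum of $\mc{O}'$''. This is false. You correctly compute $\Lambda'(j-1) = \Lambda(j) + \sigma(A;S_j) < \Lambda(j) = \Lambda'(j)$, but since the delaying operation leaves all level sets at heights $\geq j$ unchanged, $\Lambda'(j+1) = \Lambda(j+1) \geq \Lambda(j) = \Lambda'(j)$ because $j$ is a minimum for $\mc{O}$. Thus $j$ sits on a (weakly) increasing stretch of $\Lambda'$ and is not a maximum, so the Swap Lemma cannot be applied at $j$ with a strict decrease. The paper instead observes that since $\Lambda(i-1) < \Lambda(i)$ and $j$ is a minimum, there is an \emph{intermediate} maximum $\ell \in [i, j)$, and delays $A$ to $\ell$; the delaying computation inside the proof of Lemma \ref{Lem:Delaying} (namely $\sigma(A;S_{p-1}) > 2\weight(F_p)$) combines with $\ell$ being a maximum to make $(*)$ strict, whence the swap at $\ell$ strictly decreases width. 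Related to this, your remark that ``$(*)$ can be verified using the strict shortening of $A$'' is not on its own sufficient: $\sigma(A;S_j) < 0$ and $\sigma(B;S_j) \leq 0$ do not bound $2\weight(\boundary A \sqcap \boundary B)$; one needs the stronger estimate coming from $(\dag)$ as in the Delaying Lemma's proof.

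For part (1), condition (3) of Definition \ref{def:min surf} only requires exhibiting \emph{some} shortening move $B$ satisfying the inequality, yet you propose to delay $A$ and ``advance $B$ to position $j+1$'' for an arbitrary (possibly non-strict) shortening move $B$. The Advancing Lemma requires hypothesis $(\dag)$, which is a \emph{strict} inequality, so it does not apply when $\sigma(B;S_j) = 0$. The paper avoids this by simply taking $B = T_{j+1}$, the brick already in position, after delaying $A$ to height $j$; advancing is reserved for condition (4), where both moves are strict. You also need to verify that $j$ remains a maximum in the repositioned ordering, which the paper handles by noting that strictness of both $A$ and $B$ makes $j$ extremal; your proposal does not address what happens when $B$ is non-strict and $j$ might be a non-extremal maximum.
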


\begin{figure}[ht!]
\labellist
\small\hair 2pt

\pinlabel 1 at 44 538
\pinlabel 2 at 94 883
\pinlabel 3 at 44 827
\pinlabel 4 at 397 883
\pinlabel 5 at 397 591
\pinlabel 6 at 337 538
\pinlabel 7 at 198 688
\pinlabel 13 at 249 733
\pinlabel 8 at 94 733
\pinlabel 9 at 249 591
\pinlabel {10} at 94 591
\pinlabel {11} at 44 688
\pinlabel {12} at 198 538
\pinlabel {14} at 686 827
\pinlabel {15} at 746 883
\pinlabel {16} at 835 827
\pinlabel {17} at 835 688
\pinlabel {18} at 898 733

\pinlabel 1 at 44 48
\pinlabel 2 at 94 403
\pinlabel 3 at 44 351
\pinlabel 4 at 397 403
\pinlabel 5 at 397 110
\pinlabel 6 at 337 48
\pinlabel 7 at 198 188
\pinlabel 13 at 249 251
\pinlabel 8 at 94 251
\pinlabel 9 at 249 110
\pinlabel {10} at 94 110
\pinlabel {11} at 44 188
\pinlabel {12} at 198 48
\pinlabel {14} at 198 827
\pinlabel {15} at 249 883
\pinlabel {16} at 337 827
\pinlabel {17} at 337 688
\pinlabel {18} at 397 733

\pinlabel 1 at 549 538
\pinlabel 2 at 603 883
\pinlabel 3 at 549 827
\pinlabel 4 at 898 883
\pinlabel 5 at 898 591
\pinlabel 6 at 835 538
\pinlabel 7 at 686 688
\pinlabel 13 at 746 733
\pinlabel 8 at 603 733
\pinlabel 9 at 746 591
\pinlabel {10} at 603 591
\pinlabel {11} at 549 688
\pinlabel {12} at 686 538
\pinlabel {14} at 198 351
\pinlabel {15} at 249 403
\pinlabel {16} at 337 351
\pinlabel {17} at 337 188
\pinlabel {18} at 397 251

\pinlabel 1 at 549 48
\pinlabel 2 at 603 403
\pinlabel 3 at 549 351
\pinlabel 4 at 898 403
\pinlabel 5 at 898 110
\pinlabel 6 at 835 48
\pinlabel 7 at 686 188
\pinlabel 13 at 746 251
\pinlabel 8 at 603 251
\pinlabel 9 at 746 110
\pinlabel {10} at 603 110
\pinlabel {11} at 549 188
\pinlabel {12} at 686 48
\pinlabel {14} at 686 351
\pinlabel {15} at 746 403
\pinlabel {16} at 835 351
\pinlabel {17} at 835 188
\pinlabel {18} at 898 251

\pinlabel {I} [b] at 5 931
\pinlabel {II} [b] at 503 931
\pinlabel {III} [t] at 6 0
\pinlabel {IV} [t] at 503 0

\endlabellist
\includegraphics[scale=0.4]{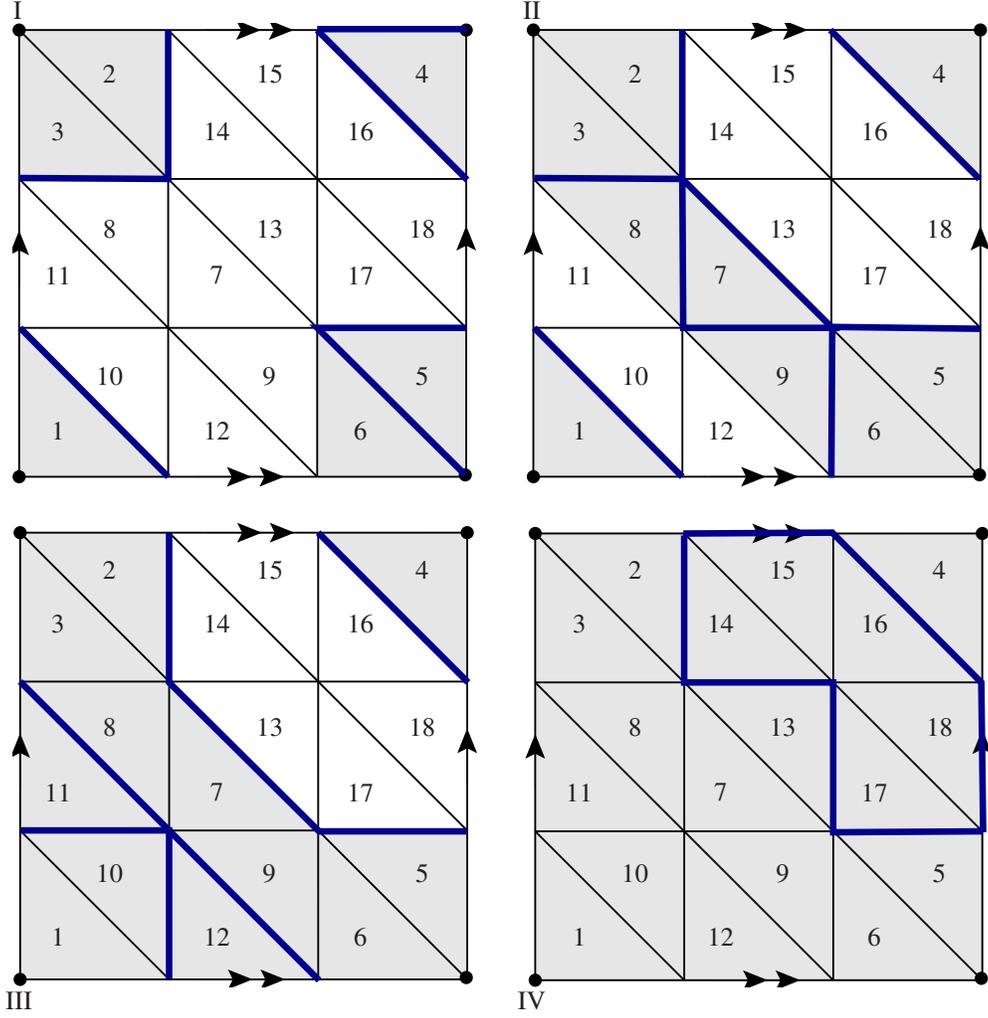}
\caption{A well-known triangulation of the torus with one vertex marked and an ordering indicated. In each frame, the thick blue curve is the level set of a local maximum of the ordering and the boundary of the shaded triangles is either empty or the level set of a local minimum of the ordering.}
\label{Fig: torustriang}
\end{figure}

Before proving Theorem \ref{Main Theorem}, we consider the example of a triangulated torus depicted in Figure \ref{Fig: torustriang}. The ordering on the triangulation is a combinatorial version of the ``standard'' Morse function on a torus with a single critical point of index 0 and a single critical point of index 2.

\begin{example}\label{torusex}
In Frame I of Figure \ref{Fig: torustriang}, $S_5$ is an unstable minimal surface. Triangles 4, 5, and 6 are all strict shortening moves. The surface $S_6$ is a stable minimal surface; it is the link of the marked vertex.  In Frame II, $S_7$ is an unstable minimal surface. Triangles 7, 8, and 9 are its strict shortening moves. The surface $S_9$ is a stable minimal surface. In Frame III, $S_{10}$ is an unstable minimal surface with triangles 10, 11, and 12 the strict shortening moves. In Frame IV, $S_{13}$ is an unstable minimal surface, with triangles 13, 14, 17 being the strict shortening moves.
\end{example}

\begin{proof}
We begin by proving the statement for maxima. Suppose that $j$ is a maximum of $\mc{O}$. If $j$ is an extremal maximum, then either $\Lambda(j-1) < \Lambda(j)$ or $\Lambda(j+1) < \Lambda(j)$. Such would imply either $\sigma(T_{j};S_j) < 0$ or $\sigma(T_{j+1};S_j) < 0$. In which case, either $T_j$ or $T_{j+1}$ is a strict shortening move. If $S_j$ has no strict shortening moves, then it is a stable minimal surface, and we are done. Suppose, therefore, that $S_j$ has a strict shortening move $T_i$.  Let $\mc{A}$ be the set of bricks of $M$ with height at most $j$ and $\mc{B}$ be the set of bricks of height at least $j+1$. By definition,
\[
\weight(\boundary_{S_j} T_i) > \weight(\boundary'_{S_j} T_i).
\]
By Lemma \ref{Lem:Delaying}, if $i < j$, then we may thin $\mc{O}$ to an ordering $\mc{O}'$  in such a way that each thinning move preserves the set $\mc{A}$ and is constant on $\mc{B}$ and so that $T_i$ is moved to be at height $j$. Since $\mc{O}$ is locally thin, width does not change. Similarly, if $i > j+1$, then by  Lemma \ref{Lem:Advancing}, we may similarly thin $\mc{O}$ to an ordering $\mc{O}'$ preserving the set $\mc{B}$, remaining constant on $\mc{A}$ and moving $T_i$ to height $j+1$. Since $\mc{O}$ is locally thin, width does not change. Indeed,  given strict shortening moves $A$ and $B$ for $S_j$ with $\mc{O}(A) \leq j$ and $\mc{O}(B) \geq j+1$, we may thin $\mc{O}$ to an ordering $\mc{O}'$ with $\mc{O}'(A) = j$ and $\mc{O}'(B) = j+1$. This may be done so that the surface $S_j$ is unchanged. Furthermore, since $A$ and $B$ are strict shortening moves, $j$ remains a maximum. 

Given a strict shortening move $A \in \mc{A}$ for $S_j$, we perform the first of our reorderings to ensure that $A$ is at height $j$. Since $j$ is a maximum, $B = T_{j+1} \in \mc{B}$ is a shortening move. If
\[
\sigma(A;S_j) + \sigma(B;S_j) + 2\weight(F) < 0,
\]
then the ordering obtained by interchanging $A$ and $B$ has strictly smaller width. The ordering $\mc{O}$ thins to this ordering, a contradiction. Thus, 
\[
\sigma(A;S_j) + \sigma(B;S_j) + 2\weight(F) \geq 0.
\]
Since both $\sigma(A;S_j)$ and $\sigma(B;S_j)$ are non-positive,
\[
 2\weight(F) \geq |\sigma(A;S_j)| + |\sigma(B;S_j)|.
\]
Similarly, if $B \in \mc{B}$ is a strict shortening move for $S_j$, then letting $A = T_j$ we have that $A\in \mc{A}$ is a shortening move for $S_j$ and
\[
 2\weight(F) \geq |\sigma(A;S_j)| + |\sigma(B;S_j)|.
\]
Finally, if $A \in \mc{A}$ and $B \in \mc{B}$ are strict shortening moves, we may thin $\mc{O}$ as above to an ordering in which $A$ is at height $j$, $B$ is at height $j+1$, and we deduce again that
\[
 2\weight(F) \geq |\sigma(A;S_j)| + |\sigma(B;S_j)|.
\]
Thus, $S_j$ is an unstable minimal surface.

Now suppose that $k$ is a minimum for $\mc{O}$ and that $A = T_i$ is a strict shortening move for $S_k$. We will derive a contradiction. Suppose, first, that $i \leq k$. We deduce that
\[
\sigma(A;S_{i-1}) > 0.
\]
Thus, $\Lambda(i-1) < \Lambda(i)$. Since $k$ is a minimum for $\Lambda$, there is a maximum $j \in [i, k)$. If $j >i$, observe that
\[
\weight(\boundary_{S_k} A) \geq \weight(\boundary_{S_{j}} A) > \weight(\boundary'_{S_{j}} A) \geq \weight(\boundary'_{S_k} A)
\]
Therefore, we may delay $A$ to height $j$, without increasing width. Furthermore, by the swap lemma (Lemma \ref{swapping lem})  we may then interchange $A = T_j$ and $B = T_{j+1}$ resulting in an ordering of strictly decreased width. This contradicts the choice of $\mc{O}$ to be locally thin. If $k < i$, we may adapt the previous argument or directly apply the previous argument to the reverse ordering to derive a contradiction.
\end{proof}

\begin{corollary}
Suppose that $(M,\weight)$ is a weighted brick complex such that there is an internal facet of $M$ on which $\weight$ is non-zero. Then $M$ contains a stable or unstable minimal surface.
\end{corollary}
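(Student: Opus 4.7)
The plan is to invoke Theorem~\ref{Main Theorem} applied to a locally thin ordering whose existence and non-triviality are both easy. The argument proceeds in three short steps.

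First, I would produce a locally thin ordering. Because $\mc{T}$ is finite, there are only finitely many orderings of the bricks of $M$, so in particular there is at least one ordering $\mc{O}$ whose width is minimal under the lexicographic order. Such an $\mc{O}$ is automatically thin, and hence locally thin in the sense of Definition~\ref{def: loc thin}.

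Second, I would verify that $\mc{O}$ must possess a (local) maximum; here is where the hypothesis on the weight function enters. Let $F$ be an internal facet with $\weight(F) > 0$, and let $A, B$ be the two bricks meeting along $F$, with $\mc{O}(A) = i < j = \mc{O}(B)$. For every height $k$ with $i \leq k < j$, the facet $F$ lies in $\boundary_I A \cap M_k$ but not in any brick of $M_k^C$ other than $B$, so $F \subset S_k$. Consequently $\Lambda(k) \geq \weight(F) > 0$ for these heights. Since $\Lambda(0) = \Lambda(N) = 0$, the function $\Lambda$ achieves its (positive) maximum over $[N]^*$ at some index $t \in [N-1]$, and such a $t$ is a (local) maximum of $\mc{O}$ in the sense defined after the swap lemma (taking $t_-$ to be the largest index $<t$ at which $\Lambda$ is strictly smaller than $\Lambda(t)$, and similarly for $t_+$).

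Third, I would apply Theorem~\ref{Main Theorem}(1) to the maximum $t$ of the locally thin ordering $\mc{O}$: the level surface $S_t$ is either a stable or an unstable minimal surface in $M$. Since $\Lambda(t) > 0$, this surface is non-empty, so $M$ does contain a stable or unstable minimal surface. I do not foresee any real obstacle; the only point that warrants care is confirming that the facet $F$ really does sit in $S_k$ for every intermediate height $k$, which is immediate from the brick-complex axiom that each internal facet is glued to exactly two bricks.
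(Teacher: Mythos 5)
Your argument is correct and is exactly the intended proof: the paper states this as an immediate corollary of Theorem~\ref{Main Theorem}, and the paper itself already records (in the paragraph following the definition of width) the observation you re-derive in your second step, namely that a positive weight on an internal facet forces every ordering to have a maximum. Taking a width-minimizing (hence locally thin) ordering and applying part (1) of the theorem at that maximum is precisely the route the authors have in mind.
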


\section{Invariants of pseudomanifolds and connected sums}

In this section and the next we consider $n$-dimensional pseudomanifolds (Definition \ref{def: pseudomanifold}). We assign a weight $\weight = 1$ to each $(n-1)$-face of a pseudomanifold, making it into an example of a brick complex. We continue to refer to the top-dimensional simplices as ``bricks.'' Define  $\width(M)$ to be the minimal width among all orderings of the top-dimensional simplices of $M$. An ordering is thin if its width is equal to the width of the pseudomanifold.

Given two (distinct) such pseudomanifolds  $M_1$ and $M_2$, we form a connected sum as follows. For $i = 1,2$, choose an $n$-brick $C_i \subset M_i$. Let $M_1 \# M_2$ be the result of removing $C_i$ from $M_i$ and gluing together the resulting complexes via some simplicial homeomorphism $\boundary C_1 \to \boundary C_2$. The resulting triangulated manifold is not uniquely defined (as a simplicial complex); the result depends on the choice of bricks $C_1$ and $C_2$ and gluing map. The main result of this section is:

\begin{theorem}\label{lower bound}
Consider an $n$-dimensional pseudomanifold $M = M_1 \# M_2$ with $n \geq 2$ even. Let $\weight(F) = 1$ for every facet $F$ of $M$. We have:
\[
\max (\width(M_1), \width(M_2)) \leq \width(M) 
\]
\end{theorem}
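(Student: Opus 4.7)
The plan is to take a thin ordering $\mc{O}$ of $M$ and construct an ordering $\mc{O}_1$ of $M_1$ satisfying $\width(\mc{O}_1) \leq \width(\mc{O})$. Since $\width(M_1) \leq \width(\mc{O}_1)$ by definition, this gives $\width(M_1) \leq \width(M)$, and the case of $M_2$ is symmetric. Write $\mc{T}(M) = \mc{T}'_1 \sqcup \mc{T}'_2$, where $\mc{T}'_i = \mc{T}(M_i) \setminus \{C_i\}$, and set $\Sigma = \boundary C_1 = \boundary C_2 \subset M$. The ordering $\mc{O}$ restricts to a linear order on $\mc{T}'_1$; define $\mc{O}_1$ by listing the bricks of $\mc{T}'_1$ in that induced order and inserting $C_1$ at a height $k_0$ to be chosen.

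At each $\mc{O}_1$-height $k$ corresponding to an $\mc{O}$-height $j = j(k)$, facets of $M_1 \setminus \Sigma$ contribute identically to $S'_k \subset M_1$ and $S_j \subset M$. For $\Sigma$-facets, writing $M_{j,i} = M_j \cap \mc{T}'_i$, let $p(j)$ denote the number of $\Sigma$-facets whose $\mc{T}'_1$-side lies in $M_{j,1}$, $q(j)$ the analogous count for $\mc{T}'_2$, and $r(j)$ the number with both sides in $M_j$. The $\Sigma$-contribution to $S_j$ is $p + q - 2r$, whereas the contribution to $S'_k$ is $p$ if $C_1 \notin M_{1,k}$ and $|\Sigma| - p$ if $C_1 \in M_{1,k}$, so the minimum achievable $\Sigma$-contribution to $S'_k$ over choices of $k_0$ is $\min(p(j), |\Sigma| - p(j))$. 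We plan to apply Lemma~\ref{Lem:Delaying} and Lemma~\ref{Lem:Advancing} iteratively to rearrange $\mc{O}$ (without increasing $\width(\mc{O})$) so that all $\mc{T}'_2$-bricks appear in a single contiguous block, and then place $C_1$ in $\mc{O}_1$ at the position corresponding to this block. This should give a pointwise bound $\weight(S'_k) \leq \weight(S_j)$ at every corresponding pair $(k, j)$, from which the width inequality follows directly.

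The hard part will be establishing that this rearrangement is always possible. A $\mc{T}'_2$-brick $A$ and an adjacent $\mc{T}'_1$-brick $B$ in $\mc{O}$ share facets only along $\Sigma$, so $F = \boundary A \sqcap \boundary B \subset \Sigma$, and Lemma~\ref{swapping lem} requires $\sigma(A; S) + \sigma(B; S) + 2\weight(F) \leq 0$, which need not hold in general. When the direct swap fails, the argument must proceed at the level of the width sequence: each local maximum of $\Lambda_{M_1}$ is bounded by some (possibly non-adjacent) local maximum of $\Lambda_M$, using the explicit formulas above together with the parity observation that $|\Sigma| = n+1$ is odd (since $n$ is even), so $\min(p, |\Sigma| - p) \leq n/2$. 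The crux is this careful bookkeeping, which distinguishes facets on $\Sigma$ from those internal to each summand and tracks how $r(j)$---the ``double-sided'' $\Sigma$-count---can inflate $\weight(S'_k)$ relative to $\weight(S_j)$.
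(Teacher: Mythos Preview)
Your high-level framework matches the paper's: restrict a thin ordering $\mc{O}$ of $M$ to the $X_1$-bricks, insert $C_1$ at some position, and compare maxima. But two of your key steps are either unworkable or missing.

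First, the plan to use Lemmas~\ref{Lem:Delaying} and~\ref{Lem:Advancing} to push all $\mc{T}'_2$-bricks into a contiguous block cannot be carried out. Those lemmas apply only to a brick $A$ satisfying $\weight(\boundary_j A) > \weight(\boundary'_j A)$, i.e.\ a strict shortening move for the target level set; a generic $X_2$-brick has no reason to satisfy this, and you give no mechanism to force it. The paper does \emph{not} rearrange $\mc{O}$ at all. Instead it fixes the induced $X_1$-order and inserts $C_1$ at a specific, non-obvious position: the first index $r+1$ such that every brick of the component of $X_2$ containing $\Sigma$ already lies in $M_{n_{r+1}-1}$. (For $M_2$ it simply puts $C_2$ first.) Your proposal never identifies this placement; you make $k_0$ contingent on a rearrangement that need not exist.

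Second, and more seriously, your fallback ``bookkeeping'' is missing the central combinatorial lemma. The inequality you need at a corresponding pair of heights is essentially
\[
(\text{$\Sigma$-contribution to } S'_k) \;\leq\; x_2(j) + (\text{$\Sigma$-contribution to } S_j),
\]
where $x_2(j)$ is the number of facets of $S_j$ lying in $X_2\setminus\Sigma$. Your parity remark $\min(p,\,n+1-p)\leq n/2$ does not control this: the right-hand side can have $\Sigma$-contribution zero while $p$ is large, so everything hinges on $x_2(j)$. The paper's key step---which has no analogue in your outline---is a link argument: if $x_2(j)>0$, pick a facet $F\subset S_j\cap(X_2\setminus\Sigma)$; each of its $(n-2)$-faces not in $\Sigma$ lies in another distinct facet of $S_j\cap X_2$ (since $S_j$ is proper and $M$ is simplicial), forcing $x_2(j)\geq n$, and in fact $x_2(j)\geq n+1$ unless $F$ meets $\Sigma$ in an $(n-2)$-face. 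A further local-finiteness argument around that $(n-2)$-face rules out the residual bad case. Without this, the comparison $\Lambda_1(\mu)\leq\Lambda(n_\ell)$ at corresponding maxima simply fails, and nothing in your sketch substitutes for it. The even-dimensionality is used elsewhere (to show $\Lambda$ is strictly monotone across a single brick, so maxima behave well), not to bound the $\Sigma$-contribution directly.
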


This result should be compared with \cite[Corollary 6.4]{SchSch}, where Scharlemann and Schultens prove the analogous statement for the width of the connected sum of knots in $S^3$. In \cite{BT}, Blair and Tomova show that this is the best result possible for knots. However, in \cite{TT}, Taylor and Tomova show how a modification of the definition of width for knots leads to it being additive under connected sum. In the next section, we explore possible parallels between that result and the setting of this paper. 

The reason for restricting to even dimensional manifolds is that, when $n$ is even, each $n$-simplex has an odd number of facets in its boundary. Consequently, when each facet has weight 1, then for any ordering and every index $\Lambda(i) \neq \Lambda(i \pm 1)$. This makes the study of maxima (and, therefore, width) much easier. 

\begin{proof}[proof of Theorem \ref{lower bound}]
Let $C_1$ and $C_2$ be the bricks whose interiors are removed from $M_1$ and $M_2$ to form $M = M_1 \# M_2$. Let $P$ be the homeomorphic image of their boundaries in $M$. Observe that by the definition of simplicial complex, for each facet of $P$, there is exactly one brick in $M_1$ and one brick in $M_2$ containing $P$.

 Let $X_1$ and $X_2$ be the two sides of $P$ in $M$ (so $X_i$ is obtained by removing the interior of $C_i$ from $M_i$ for $i = 1,2$). Let $\mathbb{O}$ be the set of thin orderings $\mc{O}$ on $M$. For $\mc{O}\in \mathbb{O}$, for an index $j \in [N]^*$ and $i = 1,2$, let $x_i(j)$ be the weight of the facets of $S_{j}$ which lie in $X_i$ but not in $P$. Let $T_{n_1}, T_{n_2}, \hdots, T_{n_m}$ be the subsequence of $(T_n)_{n = 1}^N$ consisting of bricks in $X_1$ and let $T_{u_1}, \hdots, T_{u_v}$ be the subsequence consisting of bricks in $X_2$. Without loss of generality, we may assume that $n_1 = 1$. (If not, exchange labels in what follows.) 
 
If it exists, let $r+1 \in [m]$ be the smallest index such that every brick of the component of $X_2$ containing $P$ is contained in $M_{n_{r+1} - 1}$. If such an $r+1$ does not exist, let $r = m$.

Consider the following orderings $\mc{O}_1$ and $\mc{O}_2$ of $M_1$ and $M_2$ respectively.
\[
\begin{array}{rcl}
\mc{O}_1&:& T_{n_1}, T_{n_2}, \hdots, T_{n_r}, C_1, T_{n_{r+1}}, \hdots, T_{n_m} \\
\mc{O}_2&:& C_2, T_{u_1}, T_{u_2}, \hdots, T_{u_v} \\
\end{array}
\]
 
We consider how the maxima of $\mc{O}$ relate to those of $\mc{O}_1$ and $\mc{O}_2$. We follow the strategy outlined before the proof of Lemma \ref{swapping lem}. Let $\Lambda_1$ and $\Lambda_2$ be the functions used in defining the widths of those orderings. Observe that for $i \leq r$:
\[
\Lambda_1(i) = \Lambda(n_i) - x_2(n_i) + \epsilon_1 - \epsilon_2
\]
where $\epsilon_1$ is the number of facets of $P$ contained in $M_{n_i}$ but not $S_{n_i}$ and $\epsilon_2$ is the number of facets of $P \cap M_{n_i}$ incident to a brick in $M_{n_i} \cap X_2$ but not to a brick of $M_{n_i} \cap X_1$. We say that $i$ and $n_i$ are \defn{corresponding} indices of $\mc{O}_1$ and $\mc{O}$ respectively. 

Also, if $i = r + j + 1$ for some $j \geq 1$, then
\[
\Lambda_1(i) = \Lambda(n_{r+j}) - x_2(n_{r+j}) - \epsilon_3 + \epsilon_4
\]
Here $\epsilon_3$ is the number of facets of $P \cap M_{n_{r+j}}$ which are either incident to a brick of $X_2 \cap M_{n_{r+j}}$ and not to a brick of $X_1 \cap M_{n_{r+j}}$ or vice versa. Also, $\epsilon_4$ is the number of facets of $P$ incident to no brick of $X_1 \cap M_{n_{r+j}}$ or $X_2 \cap M_{n_{r+j}}$. We say that $i = r+ j + 1$ and $n_{r+j}$ are \defn{corresponding} indices of $\mc{O}_1$ and $\mc{O}$ respectively. 

Finally,
\[
\Lambda_1(r+1) = \Lambda(n_r) - x_2(n_{r}) + \epsilon_5 - \epsilon_6
\]
where $\epsilon_5$ is the number of facets of $P$ which do not belong to $M_{n_r}$ and $\epsilon_6$ is the number of facets of $P$ which belong to $M_{n_r} \cap X_1$ but not $M_{n_r} \cap X_2$.

Similarly, for $i \in [v]$ we say that $i+1$ and $u_i$ are \defn{corresponding} indices of $\mc{O}_2$ and $\mc{O}$ respectively. Observe
\[
\Lambda_2(i+1) = \Lambda(u_i) - x_1(u_i) + \epsilon_7 - \epsilon_8
\]
where $\epsilon_7$ is the number of facets of $P$ which do not belong to $M_{u_i}$ and $\epsilon_8$ is the number of facets which belong to $M_{u_i} \cap X_2$ but not $M_{u_i} \cap X_1$.

\textbf{Claim 1:} Suppose that $\mu$ and $n_\ell$ are corresponding indices of $\mc{O}_1$ and $\mc{O}$ respectively. If both are maxima for their respective orderings, then $\Lambda_1(\mu) \leq \Lambda(n_\ell)$.

Suppose first, that $\ell \leq r$. Let $\epsilon_1$ be the number of facets of $P$ contained in $M_{n_\ell}$ but not $S_{n_\ell}$ and $\epsilon_2$ be the number of facets of $P$ incident to a brick in $M_{n_\ell} \cap X_2$ but not to a brick of $M_{n_\ell} \cap X_1$. For a contradiction, assume that $\Lambda_1(i) > \Lambda(n_\ell)$.

Thus, $\epsilon_1 \geq 1$. In particular, some brick of $X_2$ incident to $P$ belongs to $M_{n_\ell}$. By the definition of $r$, not every brick of the component of $X_2$  containing $P$ is contained in $M_{n_\ell}$. Hence, $x_2(n_\ell) > 0$.  Therefore, there is a facet $F \subset S_{n_\ell}$ contained in $X_2$ and not in $P$. Since $M_2$ is a simplicial complex, $F$ contains at most one $(n-2)$-dimensional face which is also contained in $P$. Each other $(n-2)$-dimensional face $\alpha$ is incident to at least one other facet $F_\alpha \neq F$ of $S_{n_\ell}$. By the definition of simplicial complex, if $\alpha \neq \beta$ then $F_\alpha \neq F_\beta$. Hence, considering these $F_\alpha$ along with $F$ we see that $S_{n_\ell}$ contains at least $n$ facets in $X_2$ which are not in $P$. See Figure \ref{Fig: Sum1} for an example when $n = 2$. Thus, $x_2(n_\ell) \geq n$. Furthermore, if $F$ does not share an $(n-2)$-dimensional face with $P$, then the inequality is strict. If the inequality is strict, $x_2(n_\ell) \geq n+1 = \weight(P) \geq \epsilon_1$. In which case, $\Lambda_1(i) \leq \Lambda(n_i)$. Thus, $x_2(n_\ell) = n$. In particular, every facet of $S_{n_\ell}$ in $X_2$ but not in $P$, has exactly one $(n-2)$-dimensional face in $P$.  Also, $\epsilon_1 = n+1 = \weight(P)$ and $\epsilon_2 = 0$, as otherwise $\Lambda_1(i) \leq \Lambda(n_\ell)$. 

\begin{figure}[ht!]
\labellist
\small\hair 2pt
\pinlabel {$X_1$} at 170 149
\pinlabel {$X_2$} at 281 266
\pinlabel {$F$} [b] at 382 67
\pinlabel {$\alpha$} [tl] at 455 64
\pinlabel {$F_\alpha$} [br] at 483 105
\pinlabel {$S_{n_\ell}$} [br] at 233 102
\endlabellist
\includegraphics[scale=0.43]{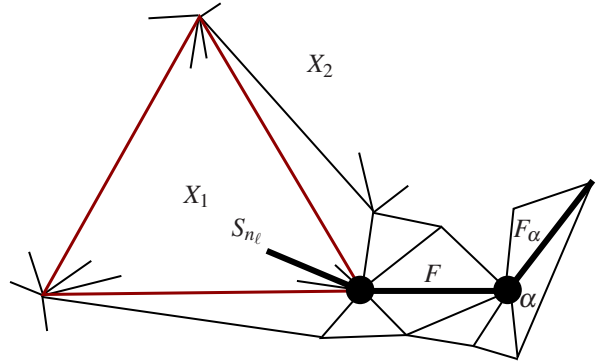}
\caption{An example of the construction of $F_\alpha$ when $n = 2$. Portions of a triangulated surface are drawn, with the surface $P$ the triangle in red. The curve $S_{n_\ell}$ is drawn in thick black lines. Two vertices of the triangulation are marked with large dots. The two marked vertices (i.e. $(n-2)$-faces) are endpoints of the facet (i.e. edge) $F \subset S_{n_\ell}$. The rightmost endpoint is $\alpha$ and  $F_\alpha$ is the edge of $S_{n_\ell}$ incident to $\alpha$.}
\label{Fig: Sum1}
\end{figure}

Let $\gamma$ be the $(n-2)$-dimensional face of $F$ contained in $P$. Since $F \subset S_{n_\ell}$, there exists a facet $\rho$ in $P$ containing $\gamma$ and incident to a brick $A_0 \subset X_2$ which does not contain $F$. Since $\epsilon_1 = \weight(P)$, the brick $A_0$ belongs to $M_{n_\ell}$.  Observe the facet $F_0$ of $\boundary A_0$ containing $\gamma$ is not equal to $F$ and cannot share an $(n-2)$-dimensional face with $F$ other than $\gamma$ (since $M$ is a simplicial complex). Since $x_2(n_\ell) = n$ and $\weight(F \cup \bigcup\limits_\alpha F_\alpha) = n$, the facet $F_0$ does not belong to $S_{n_\ell}$. Thus, there is another brick $A_1 \neq A_0$ having $\gamma$ as a face and sharing a facet with $A_0$ such that $A_1 \subset M_{n_\ell}$. There is a facet $F_1 \neq F_0$ of $A_1$ containing $\gamma$. It must also must differ from $F$ and lie in $M_{n_\ell}$. There is then another brick $A_2 \neq A_0, A_1$ in $M_{n_\ell} \cap X_2$ having $\gamma$ as a face. Continuing on in this way we define an infinite sequence of facets $F_0, F_1, \hdots$, contradicting the local finiteness of the simplicial complex $M$. See Figure \ref{Fig: Sum2} for an example when $n = 2$. 

\begin{figure}[ht!]
\labellist
\small\hair 2pt
\pinlabel {$X_1$} at 170 149
\pinlabel {$X_2$} at 281 266
\pinlabel {$F$} [t] at 436 34
\pinlabel {$\alpha$} [t] at 549 25
\pinlabel {$\gamma$} [t] at 316 26
\pinlabel {$\rho$} [r] at 251 144
\pinlabel {$A_0$} at 285 149
\pinlabel {$A_1$} at 341 94
\pinlabel {$A_2$} at 377 66
\pinlabel {$F_\alpha$} [bl] at 376 161
\pinlabel {$S_{n_\ell}$} [br] at 232 75
\endlabellist
\includegraphics[scale=0.43]{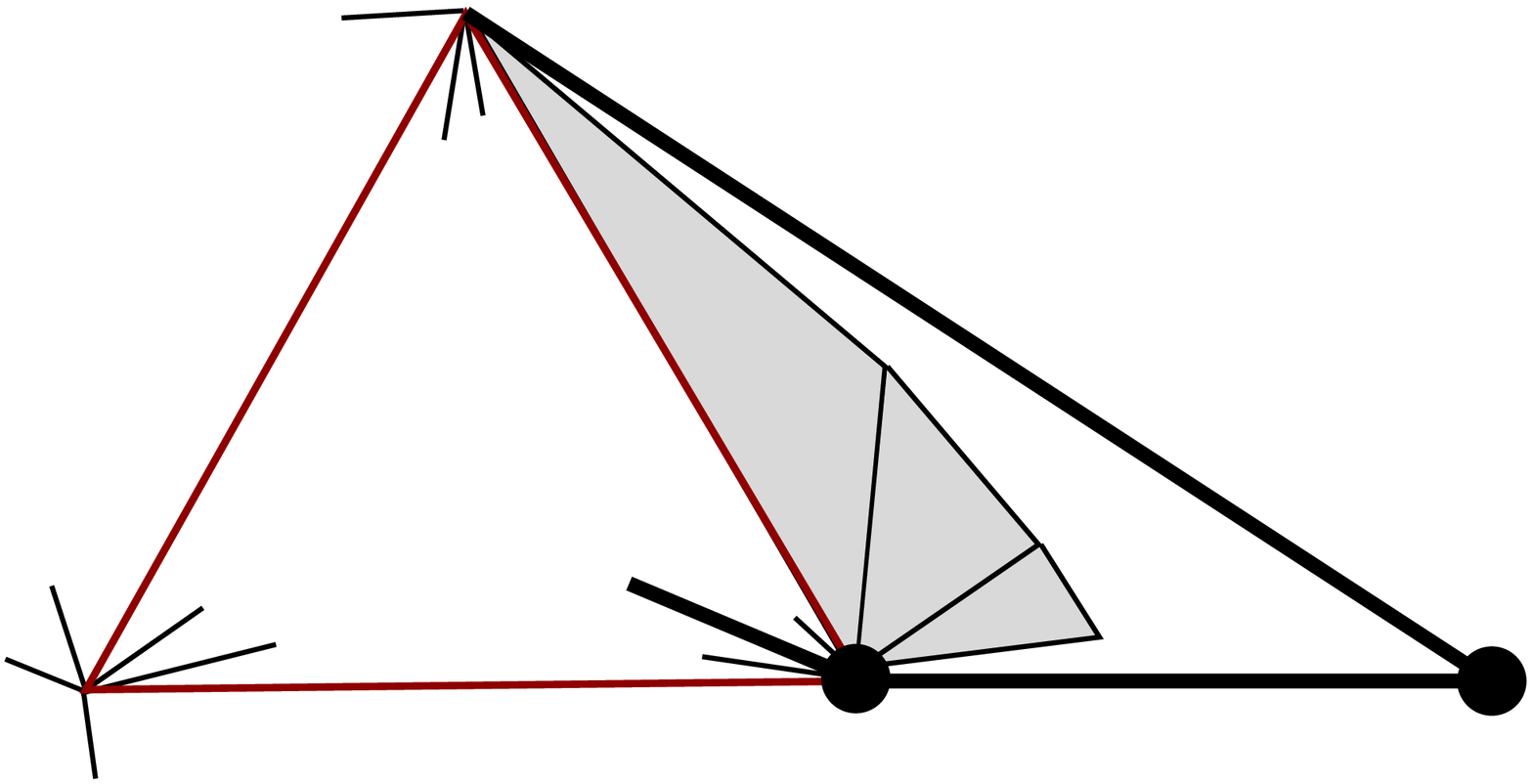}
\caption{An example of the construction of the sequence $(A_k)$ when $n = 2$. Portions of a triangulated surface are drawn, with the surface $P$ the triangle in red. The curve $S_{n_\ell}$ is drawn in thick black lines. Two vertices of the triangulation are marked with large dots. The two marked vertices (i.e. $(n-2)$-faces) are endpoints of the facet (i.e. edge) $F \subset S_{n_\ell}$. The rightmost endpoint is $\alpha$ and  $F_\alpha$ is the edge of $S_{n_\ell}$ incident to $\alpha$. The triangles $A_0, A_1, A_2, \hdots$ are shaded to indicate that they belong to $M_{n_\ell}$.}
\label{Fig: Sum2}
\end{figure}

Now suppose that $\ell \geq r+1$. By definition of $\mc{O}_1$, there is a $j \in [m]$ such that $\mu = r+j+1$ and $\ell = r+j$. Let $\epsilon_3$ is the number of facets of $P \cap M_{n_\ell}$ which are either incident to a brick of $X_2 \cap M_{n_{r+j}}$ and not to a brick of $X_1 \cap M_{n_{r+j}}$ or vice versa. Let $\epsilon_4$ is the number of facets of $P$ incident to no brick of $X_1 \cap M_{n_\ell}$ or $X_2 \cap M_{n_\ell}$. By the definition of $r$, every facet of $P$ belongs to $M_{n_r}$. Thus, $\epsilon_4 = 0$. Hence, $\Lambda_1(\mu) \leq \Lambda(n_\ell)$, as desired.

\textbf{Claim 2:} $r+1$ is not a maximum for $\mc{O}_1$.

We have $\Lambda_1(r+1) = \Lambda(r)  - x_2(n_r)+  \epsilon_5 - \epsilon_6$, where $\epsilon_5$ is the number of facets of $P$ not belonging to $M_{n_r}$ and $\epsilon_6$ is the number of facets of $P$ belonging to a brick in $M_{n_r} \cap X_1$ but not to a brick in $M_{n_r} \cap X_2$. Assume, for a contradiction, that $x_2(n_r) + \epsilon_6 \leq \epsilon_5$. If $x_2(n_r) = 0$, then, as in Claim 1, no brick of $M_{n_{r}}$ lies in $X_2$. This contradicts, the definition of $r+1$. Thus, $x_2(n_r) \geq 1$. As in Claim 1, if there is a facet $F$ of $S_{n_r} \cap X_2$ not sharing an $(n-2)$-face with $P$, then 
\[
x_2(n_r) \geq n+1 \geq \weight(P) \geq \epsilon_5.
\]
Thus, every facet $F$ of $S_{n_r} \cap X_2$ shares exactly one $(n-2)$-face with $P$. (It cannot share more than one since $M_2$ is a simplicial complex.) As before, we may also conclude that $x_2(n_r) = n$ that $\epsilon_6 = 0$, and that $\epsilon_5 = n+1$. Thus, no facet of $P$ belongs to $M_{n_r}$. Let $\gamma$ be an $(n-2)$-dimensional face of $P$ which is also a face of some brick $A \subset M_{n_r} \cap X_2$. The face $\gamma$ is a face of two distinct facets $F$ and $F'$ of $\boundary A$. Neither lies in $P$. Without loss of generality, we may assume that $F$ lies in $S_{n_r}$. The other $n-1$ faces of dimension $(n-2)$ of $F$  (besides $\gamma$) are incident to distinct facets of $S_{n_r}$, none of which is $F'$.  If $F'$ was in $S_{n_r}$, then we would have $x_2(n_r) \geq n+1$, a contradiction. Thus, $F'$ does not lie in $S_{n_r}$. The brick $A_1$ adjacent to $A$ across $F'$ must, therefore, lie in $M_{n_r}$. Let $F_1 \neq F'$ be the other face of $A_1$, containing $\gamma$. As before, $F_1$ cannot lie in $S_{n_r}$. Continuing on in this way, we construct an infinite sequence of facets having $\gamma$ as a face. This contradicts the fact that $M$ is a simplicial complex.

\textbf{Claim 3:} Suppose $\mu$ and $n_i$ are corresponding indices of $\mc{O}_1$ and $\mc{O}$ such that $\mu$ is a maximum for $\mc{O}_1$ but $n_i$ is not a maximum for $\mc{O}_2$, then there exists a maximum $s$ for $\mc{O}$ in the interval $[n_i + 1, n_{i+1} - 1]$ such that $\Lambda(s) > \Lambda_1(\mu)$.

Suppose, first, that $i \leq r$, so that $\mu = i$. Since $\mu$ is a maximum for $\mc{O}_1$, $\Lambda_1(i) > \Lambda_1(i-1)$. This implies that $\Lambda(n_i) > \Lambda(n_{i-1})$. If $n_{i-1} = n_i - 1$, then we also have $\Lambda(n_i) > \Lambda(n_i - 1)$. If $n_{i-1} < n_i - 1$, then $T_{n_i}$ is a brick in $X_1$ and $T_{n_i - 1}$ is a brick in $X_2$. They share at most one facet (which, if it exists, is in $P$). Let $\zeta$ be the number of facets shared between $T_{n_i}$ and $M_{n_{i-1}} \cap X_1$. Notice that $\zeta \leq n/2$. Since $T_{n_i}$ has $n+1$ facets, we have
\[
\Lambda(n_i) - \Lambda(n_i - 1) = (n+1) - \zeta - 1 \geq n/2 \geq 1.
\]
Thus, $\Lambda(n_i) > \Lambda(n_i - 1)$, in this case also.

Since $n_i$ is not a maximum for $\Lambda$, we must have $\Lambda(n_i + 1) > \Lambda(n_i)$. Suppose that $i \leq r-1$.  Then this implies $n_{i+1} > n_i + 1$. Since $\Lambda(n_{i+1}) < \Lambda(n_i)$, there must be a maximum $s \in [n_i + 1, n_{i+1} - 1]$ for $\Lambda$ with $\Lambda(s) > \Lambda(n_i)$, as desired. If $i = r$, then by the definition of $r$, we again have $n_{r+1} > n_r + 1$. The same argument then produces our desired maximum. 

Suppose now that $i \geq r+1$. The argument is similar to what we had before. We have $\mu = i+1$. Since $i$ is a maximum for $\Lambda_1$, $\Lambda(n_i) > \Lambda(n_{i+1})$. If $i \geq r+2$, then for the same reason we also have $\Lambda(n_i) > \Lambda(n_{i-1})$. If $n_{i-1} = n_i -1$, then tautologically $\Lambda(n_i) > \Lambda(n_i - 1)$. If $n_{i-1} < n_i - 1$, then $T_{n_i}$ is a brick in $X_1$, but $T_{n_i - 1}$ is a brick in $X_2$. These bricks share at most one facet in common, so again $\Lambda(n_i) > \Lambda(n_i - 1)$. Since $i$ is not a maximum for $\Lambda$, $\Lambda(n_i + 1) > \Lambda(n_i)$. Since $\Lambda(n_{i+1}) < \Lambda(n_i)$, there is a maximum $s \in [n_i + 1, n_{i+1}  - 1]$ for $\Lambda$ with $\Lambda(s) > \Lambda(n_i)$. 

\textbf{Claim 4:} $\width(\mc{O}_1) \leq \width(\mc{O})$

This follows immediately from Claims 1 - 3.

\textbf{Claim 5:} Suppose that $i+1$ and $u_i$ are corresponding indices of $\mc{O}_2$ and $\mc{O}$ respectively. If both are maxima for their respective orderings, then $\Lambda_2(i+1) \leq \Lambda(u_i)$.

This is nearly identical to Claim 1. Let $\epsilon_7$ be the number of facets of $P$ which do not belong to $M_{u_i}$ and $\epsilon_8$ be the number of facets which belong to $M_{u_i} \cap X_2$ but not $M_{u_i} \cap X_1$. Let $W = M_{u_i} \cap X_1$. An argument similar to the previous ones shows that  $x_1(u_i) \geq \epsilon_7$. Thus, $\Lambda_2(i+1) \leq \Lambda(u_i)$.

\textbf{Claim 6:} $1$ is not a maximum for $\mc{O}_2$.

We have $\Lambda_2(1) = \weight(P) = n+1$. Each brick of $X_2$ is incident to at most one facet of $C_2$. Thus, either $\Lambda_2(2) = \Lambda_2(1) + (n+1)$ or $\Lambda_2(2) = \Lambda(1) + n- 1$. Thus, in either case $\Lambda_2(2) > \Lambda_2(1)$.

\textbf{Claim 7:}  Suppose $\mu = i+1$ and $u_i$ are corresponding indices of $\mc{O}_2$ and $\mc{O}$ such that $\mu$ is a maximum for $\mc{O}_2$ but $u_i$ is not a maximum for $\mc{O}_2$, then there exists a maximum $s$ for $\mc{O}$ in the interval $[u_i + 1, u_{i+1} - 1]$ such that $\Lambda(s) > \Lambda_2(\mu)$.

Assume that $\Lambda_2(i+1)\geq \Lambda_2((i+1) \pm 1)$. Consequently, for $i \geq 1$, we have $\Lambda(u_i) > \Lambda(u_{i+1})$ and for all $i \geq 2$, we have $\Lambda(u_i) \geq \Lambda(u_{i-1})$. If, for $i \geq 2$, we have  $u_{i-1} = u_i - 1$, then $\Lambda(u_i - 1) < \Lambda(u_i)$, tautologically. If, for $i \geq 2$, we have $u_{i-1} < u_i - 1$, then $T_{u_i - 1}$ lies in $X_1$ and $T_{u_i}$ lies in $X_2$. Since every facet that $T_{u_i}$ shares with $X_1$ is also shared with $B$, we have $\Lambda(u_i) > \Lambda(u_i - 1)$. Since $u_i$ is not a maximum for $\mc{O}$, we must have $\Lambda(u_i + 1) > \Lambda(u_i)$. Since $\Lambda(u_{i+1}) < \Lambda(u_i)$, there exists $s \in [u_i + 1, u_{i+1} - 1]$ such that $s$ is a maximum for $\Lambda$ and $\Lambda(s) > \Lambda(u_i)$. 

\textbf{Claim 8:} $\width(\mc{O}_2) \leq \width(\mc{O})$
This follows immediately from Claims 5 - 7.
\end{proof}

For widths $\width(V_1)$ and $\width(V_2)$, let $\width(V_1) \cup \width(V_2)$ be the sequence with the same terms as the sequences $\width(V_1)$ and $\width(V_2)$, but arranged in non-increasing order. As a consequence, we have:

\begin{corollary}\label{width bounds}
Suppose that $V_1$ and $V_2$ are two closed $n$-dimensional simplicial pseudomanifolds with $n\geq 2$ even. Then there is a connected sum $V_1 \# V_2$ such that
\[
\max(\width(V_1), \width(V_2)) \leq \width(V_1 \# V_2) \leq \width(V_1) \cup \width(V_2)
\]
\end{corollary}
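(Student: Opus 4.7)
The lower bound is immediate from Theorem \ref{lower bound}, so the work is in producing a connected sum achieving the upper bound. The plan is to choose thin orderings $\mc{O}_1$ and $\mc{O}_2$ of $V_1$ and $V_2$, take $C_1$ to be the last brick of $\mc{O}_1$ and $C_2$ to be the first brick of $\mc{O}_2$, glue $\partial C_1$ to $\partial C_2$ by any simplicial homeomorphism to form $V_1 \# V_2$, and concatenate the two orderings into an ordering $\mc{O}$ on $V_1 \# V_2$: first list the bricks of $V_1 \setminus C_1$ in the order given by $\mc{O}_1$, then the bricks of $V_2 \setminus C_2$ in the order given by $\mc{O}_2$. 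There are $N_1 + N_2 - 2$ bricks in total.

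The first task is to verify that the level-set weights transfer cleanly: $\Lambda(i) = \Lambda_1(i)$ for $i \leq N_1 - 1$, and $\Lambda(N_1 + j - 2) = \Lambda_2(j)$ for $j \in [2, N_2]$. This reduces to a case analysis on facets. A facet disjoint from the gluing surface $P = \partial C_1 = \partial C_2$ has the same pair of incident bricks in $V_1 \# V_2$ as it does in $V_1$ or $V_2$, so it contributes identically to both $\Lambda$ and to $\Lambda_1$ or $\Lambda_2$. A facet lying in $P$ is counted in $S_i^{V_1}$ (for $i \leq N_1 - 1$) precisely because $C_1$ has not yet been added; after identifying $\partial C_1$ with $\partial C_2$, the same facet is counted in $S_i$ for the analogous reason that some brick of $V_2 \setminus C_2$ lies in the complement. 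The $V_2$-side argument is symmetric, and the particular choice of gluing map is irrelevant for the counts.

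Next I would compare maxima. For $i \in [1, N_1 - 2]$ or $i \in [N_1 + 1, N_1 + N_2 - 3]$, whether $i$ is a maximum of $\mc{O}$ is determined by $\Lambda$ at $i$ and $i \pm 1$, which matches $\Lambda_1$ or $\Lambda_2$ at the corresponding indices, so maxima here correspond bijectively with equal values. The two boundary indices $N_1 - 1$ and $N_1$ need direct inspection: since the second brick of $\mc{O}_2$ shares at most one facet with $C_2$, a quick count gives $\Lambda_2(2) \in \{2n, 2(n+1)\}$, which for $n \geq 2$ strictly exceeds $n+1 = \Lambda_2(1) = \Lambda(N_1 - 1)$. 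This rules out $N_1 - 1$ as a maximum of $\mc{O}$ and makes ``$N_1$ is a maximum of $\mc{O}$'' equivalent to ``$2$ is a maximum of $\mc{O}_2$'', with the same $\Lambda$-value. Consequently, the multiset of values of $\mc{O}$ at its maxima is obtained from the corresponding multisets for $\mc{O}_1$ and $\mc{O}_2$ by possibly deleting entries. Since deleting terms from a sorted non-increasing sequence only weakly decreases it in lexicographic order, this gives $\width(V_1 \# V_2) \leq \width(\mc{O}) \leq \width(V_1) \cup \width(V_2)$.

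The main obstacle is the bookkeeping in the level-set identification: one must verify carefully that facets touching the gluing surface $P$ are neither double-counted nor missed when passing between $\mc{O}$ and $\mc{O}_1, \mc{O}_2$. Everything else is a matter of tracking which indices remain maxima across the join.
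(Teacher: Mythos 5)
Your proposal matches the paper's proof essentially exactly: same construction (glue along the last brick of a thin ordering of $V_1$ and the first brick of a thin ordering of $V_2$, then concatenate the orderings), the same identification of level-set weights $\Lambda(i) = \Lambda_1(i)$ for small $i$ and $\Lambda(N_1 + j - 2) = \Lambda_2(j)$ for $j \geq 2$, and the same observation that $\Lambda_2(2) > \Lambda_2(1) = n+1$ because $U_1$ and $U_2$ share at most one facet. Your version is, if anything, slightly cleaner at the seam, noting explicitly that $N_1 - 1$ can never be a maximum of the concatenated ordering, whereas the paper only remarks that if it were a maximum it would also be one for $\mc{O}_1$.
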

\begin{proof}
Suppose that 
\[
\mc{O}_1: T_1, T_2, \hdots, T_a
\]
and 
\[
\mc{O}_2: U_1, U_2, \hdots, U_b
\] 
 are thin orderings of $V_1$ and $V_2$ respectively. Let $M = V_1 \# V_2$ be the $n$-manifold obtained by removing the interiors of $T_a$ and $U_1$ and choosing a simplicial homeomorphism $\boundary T_a \to \boundary U_1$ to glue the resulting spaces together. Consider the ordering
 \[
 \mc{O}: T_1, T_2, \hdots , T_{a-1}, U_2, \hdots, U_{\ell -1}, U_{\ell +1}, \hdots, U_b.
 \]
of $M$. Observe that for $i \leq a-2$, $i$ is a maximum for $\mc{O}$ if and only if it is a maximum for $\mc{O}_1$. Furthermore, for such an $i$, $\Lambda(i) = \Lambda_1(i)$. For $i \geq a+1$,  since $\boundary U_1$ and $\boundary M_{a-1}$ contain exactly the same facets, $i$ is a maximum for $\mc{O}$ if and only if $i-a+2$ is a maximum for $\mc{O}_2$. Furthermore, the maxima take exactly the same values.

Consider $i= a-1$. We have $\Lambda_1(i) > \Lambda_1(i+1)$, so if $i$ is a maximum for $\mc{O}$, it is also a maximum for $\mc{O}_1$ of exactly the same value.  Now consider $i = a$. Since $U_1$ and $U_2$ share at most one facet by the definition of simplicial complex, $\Lambda_2(2) > \Lambda_2(1)$. Consequently, if $i = a$ is a maximum for $\mc{O}$, then $i - a + 2$ is also a maximum for $\mc{O}_2$ of exactly the same value. 

Hence, \[\width(\mc{O}) \leq \width(\mc{O}_1) \cup \width(\mc{O}_2) = \width(M_1) \cup \width(M_2).\]

From Theorem \ref{lower bound}, we see that
\[
\max(\width(M_1), \width(M_2)) \leq \width(M). 
\]
\end{proof}

In \cite{Ozawa}, Ozawa defines an invariant of knots in $S^3$, called \emph{trunk}. He shows that the trunk of the connected sum of two knots is at most the maximum of the trunk of the factors. Problem 1.8 of \cite{Ozawa} asks if equality necessarily holds. In \cite{DZ}, Davies and Zupan show that it does. We adapt Ozawa's definition to our setting and prove the corresponding theorem.

\begin{definition}[cf. \cite{Ozawa}]
Suppose that $M$ is an $n$-dimensional pseudomanifold and let $\mc{O}$ be an ordering of $M$. The \defn{trunk} of $\mc{O}$ is equal to the maximum value  $\tr(\mc{O})$ achieved by $\Lambda$. The \defn{trunk}  $\tr(M)$ of $M$ is the minimum of $\tr(\mc{O})$ over all orderings $\mc{O}$ of $M$.
\end{definition}

Recall that for an ordering $\mc{O}$,  $\tr(\mc{O})$ is the initial entry in the sequence $\width(\mc{O})$. Since we compare width lexicographically and since thin orderings minimize width, $\tr(M) = \tr(\mc{O})$ for every thin ordering $\mc{O}$. An easy adaptation of the proof of Theorem \ref{width bounds} yields the following version of the Davies-Zupan theorem.

\begin{corollary}
Let $M_1$ and $M_2$ be an $n$-dimensional pseudomanifolds with $n \geq 2$ even. Then there exists a connected sum $M_1 \# M_2$ such that
\[
\tr(M_1 \# M_2) = \max(\tr(M_1), \tr(M_2))
\]
\end{corollary}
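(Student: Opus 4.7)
The plan is to establish both inequalities separately: the lower bound $\tr(M_1 \# M_2) \geq \max(\tr(M_1), \tr(M_2))$ will hold for every connected sum, as an immediate corollary of Theorem~\ref{lower bound}, while the matching upper bound will come from an explicit construction of an ordering on a particular $M_1 \# M_2$, mirroring the proof of Corollary~\ref{width bounds}. For the lower bound, I would simply note that the trunk is by definition the first (largest) entry of the width sequence and widths are compared lexicographically; so the inequality $\max(\width(M_1), \width(M_2)) \leq \width(M_1 \# M_2)$ from Theorem~\ref{lower bound} passes immediately to trunks.

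For the upper bound, I would take thin orderings $\mc{O}_1 \co T_1, \ldots, T_a$ of $M_1$ and $\mc{O}_2 \co U_1, \ldots, U_b$ of $M_2$ (which, since trunk is the first entry of width, realize $\tr(M_1)$ and $\tr(M_2)$ respectively), then form $M_1 \# M_2$ by removing $T_a$ and $U_1$ and gluing $\partial T_a \to \partial U_1$ via a simplicial homeomorphism, and finally consider the ordering
\[
\mc{O} \co T_1, T_2, \ldots, T_{a-1}, U_2, U_3, \ldots, U_b.
\]
The key claim, already essentially established in the proof of Corollary~\ref{width bounds}, is that $\Lambda(i) = \Lambda_1(i)$ for every $i \leq a-1$ and $\Lambda(i) = \Lambda_2(i - a + 2)$ for every $i \geq a$. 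Granting this, every value $\Lambda$ takes on $\mc{O}$ is either a value of $\Lambda_1$ on $\mc{O}_1$ or a value of $\Lambda_2$ on $\mc{O}_2$, so
\[
\tr(\mc{O}) \leq \max(\tr(\mc{O}_1), \tr(\mc{O}_2)) = \max(\tr(M_1), \tr(M_2)),
\]
which together with the lower bound gives the desired equality.

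The step requiring the most care is verifying the claim at the two transition indices $i = a-1$ and $i = a$, where the combinatorics of the glue surface $P \cong \partial T_a \cong \partial U_1$ must be tracked. For $i \leq a-2$ the sublevel set $M_i \subset M_1 \# M_2$ is combinatorially the same as its counterpart in $M_1$, so the equality of $\Lambda$-values is immediate; the symmetric statement handles $i \geq a+1$ from the $M_2$ side. At $i = a-1$, $\partial M_{a-1}$ is exactly the $n+1$ facets of $P$ (since $M_1$ is closed and $M_{a-1} = M_1 \setminus T_a$), matching $\Lambda_1(a-1) = n+1$; and at $i = a$ the level surface picks up the $(n+1)-k$ facets of $\partial U_2$ off $P$ while shedding the $k \in \{0,1\}$ facets of $P$ that $U_2$ meets, exactly as in $\mc{O}_2$ at height $2$. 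Once this bookkeeping is carried out the argument is complete.
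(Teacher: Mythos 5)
Your proof is correct and is exactly the ``easy adaptation of the proof of Corollary \ref{width bounds}'' that the paper invokes without spelling out: the lower bound on trunk drops out of Theorem~\ref{lower bound} by passing to first entries of the lexicographically-ordered width sequences, and the upper bound comes from the same explicit ordering $T_1,\ldots,T_{a-1},U_2,\ldots,U_b$ used in Corollary~\ref{width bounds}, together with the observation that every $\Lambda$-value of $\mc{O}$ is a $\Lambda_1$- or $\Lambda_2$-value of $\mc{O}_1$ or $\mc{O}_2$ (including at the transition indices $i=a-1$ and $i=a$, which you correctly handle). Your bookkeeping at the seam is right, and the conclusion follows.
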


\section{Questions and Conjectures}

\subsection{Comparisons with Thompson's work}
Theorem 13 of \cite{Thompson} shows that (using Thompson's definitions) if a 2-dimensional sphere has a triangulation with a thin ordering having a single maximum, then the triangulation is the tetrahedral triangulation. Recall that in our context, each brick in an $n$-dimensional pseudomanifold is an $n$-simplex and that we traditionally assign a weight of 1 to each $(n-1)$-face (i.e. facet).

\begin{question}
Suppose that $M$ is an $n$-dimensional pseudomanifold such that there is a thin ordering having a single maximum. What special topological properties does $M$ have? Must it be an $n$-ball?
\end{question}

Thompson \cite{Thompson} uses the fact that all triangulations of a disc are shellable to construct \emph{embedded} closed geodesics in triangulated 2-spheres. In our context, this raises the question:

\begin{question}
Do the stable and unstable combinatorial minimal surfaces constructed in this paper correspond in some natural way to \emph{embedded} stable and unstable minimal surfaces?
\end{question}

Section 6 of \cite{Thompson} proves a combinatorial analogue of the ``3 geodesics theorem'' from differential geometry. For example, considering the tetrahedron as a 2-dimensional sphere, if we choose a pair of disjoint edges, the remaining edges are an unstable geodesic (i.e. minimal surface). As there are three such pairs of disjoint edges, the tetrahedron has three unstable geodesics. If a sphere is triangulated with more vertices, Thompson's construction produces an ordering with at least two maxima and one minima. Consequently, there are at least two unstable geodesics and one stable geodesic. As the author pointed out to us \cite{Thompson-pc}, the result is somewhat weak since generally the link of a vertex in a triangulated 2-sphere will be a stable geodesic. This means that it is usually easy to find at least 3 closed stable geodesics without appealing to her techniques. In our view, this should not cause too much distress, as a triangulated sphere with many vertices should correspond to a very bumpy sphere, where we might expect many simple closed geodesics.  Nevertheless, we can ask:

\begin{question}
If $P$ is a triangulated closed 2-dimensional manifold (every edge having weight 1), what is the relationship between the genus of $P$, the number of vertices of the triangulation, and the number of closed, embedded combinatorial geodesics?
\end{question}

\begin{example}
Consider the triangulation of the torus given in Figure \ref{Fig: torustriang}. It is given by a decomposition of a square into 9 smaller squares and then decomposing each of those into two triangles. The three vertical lines and the three horizontal lines and the three diagonals are all stable geodesics. However, there are other stable geodesics, corresponding to some of the other slopes. Each square is also a stable geodesic. 
\end{example}

\subsection{Comparison with Bachman's work}

We can try to interpret stable and unstable minimal surfaces using Bachman's theory of topological index. For simplicity, we phrase this only for embedded surfaces (i.e. curves) in a compact 2-dimensional manifold with a triangulation where every edge has weight 1. Notice that in such a setting, every shortening move is a strict shortening move. Given a surface  (i.e. curve) $S \subset M$, let $\mc{D}$ denote the simplicial complex defined as follows. Each shortening move for $S$ is a vertex of $\mc{D}$.  If we have $m\geq 2$ distinct shortening moves for $S$, they span an $m-1$ simplex if they pairwise do not share any facets of $S$. The \defn{topological index} of $S$ is $0$ if $\mc{D}$ is empty and is  $k \geq 1$, if  $k$ is the least number such that the $k$th homotopy group $\pi_{k-1}(\mc{D})$ is nontrivial. 

\begin{example}
Consider the triangulation and ordering in  Figure \ref{Fig: torustriang}.  The curve $S_5$ is an embedded, separating surface (i.e. curve) in the torus. In Example \ref{torusex} we examined the shortening moves and saw that it is an unstable minimal surface. Triangles 4, 5, and 6 are shortening moves. Since triangles 4 and 6 do not share any facets (i.e. edges) in $S_5$, they are the endpoints of an edge in $\mc{D}$. However, both of them do share a facet with triangle 5 that lies in $S_5$, and so neither spans an edge in $\mc{D}$ with triangle 5. Consequently, $\mc{D}$ consists of a single edge and an isolated vertex. In particular $\pi_0(\mc{D})$ is nontrivial. Since  $\mc{D}$ is nonempty, the topological index of $S_5$ is equal to 1.
\end{example}

\begin{lemma}
Assume that $M$ is a triangulated (as a simplicial complex) compact, connected 2-dimensional manifold, where every edge of the triangulation has weight 1. Let $S$ be a proper embedded separating surface (i.e. curve). Then $S$ is a stable minimal surface if and only if it is index 0 and $S$  is an unstable minimal surface if and only if it has topological index 1. 
\end{lemma}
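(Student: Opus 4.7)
My plan is to reduce both equivalences to a clean combinatorial description of the complex $\mathcal{D}$ in terms of the two sides of the separating curve $S$. Write $X_1, X_2$ for the closures of the two components of $M\setminus S$. Since $M$ is a simplicial $2$-manifold, any two triangles share at most one edge, and an edge shared by triangles $A\subset X_i$ and $B\subset X_j$ lies in $S$ iff $i\ne j$; in particular, shortening moves on the same side of $S$ never share a facet of $S$.

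The stable equivalence is a direct unwrapping. By Definition~\ref{def:min surf}, $S$ is stable iff it has no strict shortening move; since in this setting every shortening move is strict (as noted in the paper), this is equivalent to the vertex set of $\mathcal{D}$ being empty, i.e.\ $S$ having topological index $0$.

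For the unstable equivalence I would record three structural facts. (a) A shortening move $T$ with $a$ interior edges in $S$ and no boundary edges satisfies $\sigma(T;S)=(3-a)-a$, so $|\sigma(T;S)|\in\{1,3\}$, and the case $|\sigma|=3$ occurs exactly when $\boundary T$ is a whole component of $S$. Since $\weight(\boundary A\sqcap \boundary B)\le 1$, Definition~\ref{def:min surf}(4) rules out $|\sigma|\ge 2$ strict shortening moves whenever the opposite side contains any strict shortening move. (b) When $|\sigma(A;S)|=|\sigma(B;S)|=1$, the inequality in Definition~\ref{def:min surf}(3)--(4) is equivalent to ``$A$ and $B$ share an edge,'' which for opposite-side triangles means sharing an edge of $S$. (c) The shortening moves on each $X_i$ form a clique in $\mathcal{D}$ (they never share an $S$-edge), while a cross-side pair spans an edge of $\mathcal{D}$ iff it shares no edge at all; hence $\mathcal{D}$ is disconnected iff both sides contain a shortening move and every cross-side pair shares an $S$-edge.

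With these in hand, for $(\Rightarrow)$ any partition $\mathcal{A},\mathcal{B}$ witnessing instability must coincide with $\{X_1,X_2\}$, because condition~(1) of Definition~\ref{def:min surf} forces $\mathcal{A}$ to be a union of components of the graph on triangles recording non-$S$ adjacencies, whose components are precisely $X_1, X_2$. Conditions~(2) and~(4) then become, via (a)--(c), exactly the disconnectedness criterion for $\mathcal{D}$, so the index equals $1$. For $(\Leftarrow)$, index~$1$ means $\mathcal{D}$ is nonempty and disconnected; (c) yields shortening moves on both sides with every cross-side pair sharing an $S$-edge, and then $\mathcal{A}=X_1,\ \mathcal{B}=X_2$ verifies (1)--(4) using (a)--(c). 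The main obstacle is step~(a): one must check carefully, in both directions, that the $|\sigma|=3$ case and the boundary-induced $|\sigma|=2$ case are incompatible with the hypotheses ($S$ unstable, or $S$ of topological index~$1$), so that the clean $|\sigma|=1$ reduction really applies and the two characterizations line up precisely.
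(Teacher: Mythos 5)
Your approach matches the paper's in its essential structure: both proofs reduce the unstable/index-1 equivalence to a combinatorial analysis of $\mc{D}$ via the two sides of $S$, using the observations that same-side shortening moves never share an $S$-edge (so span an edge in $\mc{D}$), while opposite-side shortening moves span an edge in $\mc{D}$ precisely when they share no edge at all. Your clique formulation (c) is a slightly cleaner packaging of the same facts. The paper proves ``unstable $\Rightarrow$ index $1$'' by taking a path in $\mc{D}$ from $\mc{A}$ to $\mc{B}$ and choosing the crossing edge, then squeezing $2 \geq 2\weight(\boundary A \sqcap \boundary B) \geq |\sigma(A;S)|+|\sigma(B;S)| \geq 2$ to force a shared non-$S$ edge and contradict condition (1); and proves the converse by verifying (1)--(4) directly with $\mc{A}, \mc{B}$ the two sides. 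This is the same computation you record in (a)--(b).

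The obstacle you flag at the end is a real gap in your proposal, and you should close it rather than leave it as a remark. In the direction ``index $1$ $\Rightarrow$ unstable,'' you cannot appeal to condition (4) to rule out the $|\sigma|=3$ case, since (4) is what you are trying to establish. The needed argument is: if $T$ is a shortening move with $\boundary T \subset S$, then $\boundary T$ is a whole component of the embedded curve $S$, so each vertex of $T$ already has $S$-degree $2$ from the two edges of $\boundary T$ meeting it; hence no other edge incident to a vertex of $T$ can lie in $S$, so any triangle adjacent to $T$ has at most one edge in $S$ and is therefore not a shortening move. Consequently $T$ shares no edge (hence no $S$-edge) with any other vertex of $\mc{D}$, so $T$ is $\mc{D}$-adjacent to every other shortening move and $\mc{D}$ is connected -- contradicting $\pi_0(\mc{D})$ nontrivial. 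This completes your reduction to the $|\sigma|=1$ case. (Your ``boundary-induced $|\sigma|=2$'' concern evaporates once you note that the statement concerns interior shortening moves of a closed triangulated surface, which is the setting the paper intends; with $b>0$ boundary edges, $\sigma = 3-b-2a$ can vanish, and then the preliminary claim that every shortening move is strict would fail.) It is worth noting that the paper's own write-up of the converse direction acknowledges that a shortening move may have ``two or three'' edges in $S$ but then asserts $|\sigma(A;S)|+|\sigma(B;S)|=2$ without explaining why the three-edge case is excluded, so your caution is pointing at a step the published proof also leaves implicit.
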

\begin{proof}
The condition ``$S$ has index 0'' is equivalent to $S$ having no shortening moves. Since in our context, all shortening moves are strict shortening moves, this is equivalent to $S$ being a stable minimal surface.

Suppose, therefore, that $S$ is an unstable minimal surface. Let $\mc{A}$ and $\mc{B}$ be the partition of the bricks of $M$ from the definition. Since all shortening moves are strict shortening moves, by part (2) of Definition \ref{def:min surf}, $\mc{A}$ and $\mc{B}$ each contain a strict shortening move for $S$, so $\pi_0(\mc{D})$ is nonempty. By part (4), every strict shortening move in $\mc{A}$ shares a facet with every strict shortening move in $\mc{B}$. If $\pi_0(\mc{D})$ were trivial, then $\mc{D}$ would be connected. In particular, given $ A \in \mc{A}$ and $B \in \mc{B}$, there would be a sequence
\[
A = T_0, T_1, \hdots, T_n = B
\]
such that $T_i$ and $T_{i+1}$ are the endpoints of an edge in $\mc{D}$. Since $\mc{A}$ and $\mc{B}$ partition the triangles, we can choose $A$ and $B$ to be the endpoints of an edge in $\mc{D}$. By the definition of $\mc{D}$, this means that $A$ and $B$ do not share any facets of $S$. Since they are each shortening moves for $S$ and are triangles,
\[
|\sigma(A;S)| + |\sigma(B;S)| \in \{2, 4, 6\},
\]
with the sum being equal to 2 if and only if both $A$ and $B$ have exactly two of their edges lying in $S$. Notice also, that $|\boundary A \sqcap \boundary B| \in \{0,1\}$. By the definition of simplical complex, $A$ and $B$ can share at most one edge. By part (4) of Definition \ref{def:min surf},
\[
2 \geq 2\weight(\boundary A \sqcap \boundary B) \geq |\sigma(A;S)| + |\sigma(B;S)| \geq 2.
\]
Thus, $A$ and $B$ share exactly one edge and that edge is not in $S$. But this contradicts part (1) of Definition \ref{def:min surf}. Thus, $\pi_0(\mc{D})$ must be nontrivial, so $S$ has topological index equal to 1.

Suppose, on the other hand, that $\pi_0(S)$ is non-trivial. Let $A$ and $B$ be bricks of $M$ lying in distinct components of $\mc{D}$. Since both are shortening moves, they each have two or three of their edges lying in $S$. Since they lie in distinct components of $\mc{D}$, they are not the endpoints of an edge in $\mc{D}$. Thus, they must share an edge of the triangulation and that edge must be a facet in $S$. By the definition of simplicial complex, that is their unique shared edge. Thus,
\[
2\weight(\boundary A \sqcap \boundary B) = 2|\boundary A \sqcap \boundary B| = 2 = |\sigma(A; S)| + |\sigma(B; S)|.
\]
Since $S$ is separating, $A$ and $B$ lie on opposite sides of $S$.  We conclude that whenever $A', B' \in \mc{D}$ lie in different components of $\mc{D}$, they lie on distinct sides of $S$ and the inequality from Conditions (3) and (4) of Definition \ref{def:min surf} holds for $A'$ and $B'$. 

Let $\mc{A}$ be all the triangles of $M$ on the same side of $S$ as $A$ and $\mc{B}$ be all the triangles of $M$ on the same side of $S$ as $B$. Since $S$ is separating, Condition (1) of Definition \ref{def:min surf} is satisfied. Since $A \in \mc{A}$ and $B \in \mc{B}$, Condition (2) is satisfied. Suppose that $A'$ is a shortening move for $S$ on the same side of $S$ as $A$. If $A$ and $A'$ share no facet that is also a facet in $S$, then $A$ and $A'$ are the endpoints of an edge in $\mc{D}$, and thus lie in the same component of $\mc{D}$. If $A$ and $A'$ do share a facet, by the definition of simplicial complex, that is their unique shared facet. If that facet lies in $S$, then $A$ and $A'$ do not lie on the same side of $S$ as each other, as $S$ is an embedded curve in a 2-dimensional manifold. Thus, if $A' \in \mc{A}$ is a shortening move for $S$, then $A'$ is in the same component of $\mc{D}$ as $A$. Similarly, if $B' \in \mc{B}$ is a shortening move for $S$, then $B'$ lies in the same component of $\mc{D}$ as $B$. We conclude that Definition \ref{def:min surf} is satisfied and $S$ is an unstable minimal surface.
\end{proof}

\begin{question}
What is the ``right'' definition of index when $S$ is non-separating or non-embedded? Are there minimal surfaces of arbitrarily high index?
\end{question}

\subsection{Connected Sums}

In the previous section, we gave bounds on the width of a connect sum in terms of the widths of the factors. The upper bound relied on constructing the sum using particular choices of bricks. Perhaps the choice can be made in such a way that the upper bound no longer holds?

\begin{conjecture}
There exists $n$-dimensional simplicial manifolds $M_1$ and $M_2$ and a choice of connected sum $M_1 \# M_2$ such that
\[
\width(M_1 \# M_2) > \width(M_1) \cup \width(M_2).
\]
\end{conjecture}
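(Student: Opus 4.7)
The plan is to produce explicit low-dimensional examples where an adversarial choice of gluing bricks forces the width to strictly exceed the upper bound from Corollary \ref{width bounds}. I would work first in dimension $n=2$, where the geometry of a ``bad'' gluing is easiest to see. The strategy is to build triangulated 2-spheres $M_1$ and $M_2$ whose thin orderings have a readily computable width profile; a natural family is ``tube-like'' triangulations of $S^2$ that look like long prisms capped at each end, for which a thin ordering is essentially a linear sweep from one cap to the other and the unique maximum equals the weight of the thinnest meridional circle. This pins down $\width(M_1)$ and $\width(M_2)$ exactly, hence $\width(M_1) \cup \width(M_2)$.

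Next, I would choose the bricks $C_1$ and $C_2$ for the connected sum to lie at the \emph{fattest} part of each tube rather than near a cap, and I would choose the simplicial gluing $\boundary C_1 \to \boundary C_2$ so that the two tubes come together in a ``skewed'' way that prevents the gluing sphere $P$ from ever appearing as a low-weight level surface. The intended behavior is that any ordering $\mc{O}$ of $M_1 \# M_2$ which begins inside the image of $X_1$ must eventually cross $P$, and by the time it does so the sublevel set already contains the thick part of $X_1$; once bricks of $X_2$ begin to be added, they are being added near the thick part of $X_2$ as well, and the level surface picks up facets from both sides of $P$ simultaneously.

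The analytical heart of the argument is a lower bound on $\width(M_1 \# M_2)$. I would adapt the bookkeeping of the proof of Theorem \ref{lower bound}, but run it against the adversary: given any ordering $\mc{O}$ of the connected sum, induce orderings $\mc{O}_1$ and $\mc{O}_2$ of $M_1$ and $M_2$ by inserting $C_1$ and $C_2$ at the first moment $j^\star$ when $P \subset S_{j^\star}$, and track the discrepancy $\Lambda(j^\star) - \Lambda_i(\cdot)$ using the $\epsilon_1,\dots,\epsilon_8$ accounting from the Claims in that proof. The target inequality is that, because of the placement of $C_1$ and $C_2$, the level set $S_{j^\star}$ must contain at least one full ``thick meridian'' of $M_1$ together with enough facets on the $M_2$ side to push $\Lambda(j^\star)$ strictly past every entry of $\width(M_1)\cup\width(M_2)$ in the lexicographic order.

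The main obstacle is precisely this lower bound. The Swap Lemma together with Lemmas \ref{Lem:Delaying} and \ref{Lem:Advancing} gives the adversary tremendous freedom to reorder, so ruling out \emph{every} ordering requires an intrinsic invariant of the pair $(M_1 \# M_2, P)$ that is insensitive to legal swaps yet genuinely detects the bad gluing --- an analogue of Heegaard distance or bridge distance. I would try to isolate such an invariant in the dual graph, for instance a notion of ``minimum cut weight separating the $C_1$-side from the $C_2$-side once $P$ has been crossed,'' prove that it is non-increasing under the operations of Section \ref{orderings}, and engineer $M_1$, $M_2$, $C_1$, $C_2$ so that this invariant provably exceeds $\max(\width(M_1), \width(M_2))$. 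If the $n=2$ tube construction succeeds, I would then generalize by taking suspensions or iterated products to cover arbitrary even $n$, since the upper bound in Corollary \ref{width bounds} is stated in that generality.
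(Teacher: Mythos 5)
This statement is posed as a \emph{conjecture}; the paper offers no proof, only the surrounding context (Theorem~\ref{lower bound}, Corollary~\ref{width bounds}, and the analogy with Blair--Tomova's non-additivity of knot width). There is therefore no paper argument to compare against, and what you have written is a research plan rather than a proof; I will assess it as such.

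The essential gap --- which you flag yourself but do not close --- is the lower bound on $\width(M_1 \# M_2)$. Your accounting modeled on the $\epsilon_1,\dots,\epsilon_8$ bookkeeping of Theorem~\ref{lower bound} only compares an ordering of the connected sum to \emph{induced} orderings of the summands, which gives information in the direction $\width(\mc{O}_i) \leq \width(\mc{O})$, i.e.\ it re-derives the lower bound of the theorem, not a lower bound that beats $\width(M_1)\cup\width(M_2)$. To beat the union you need a genuinely new argument showing that for \emph{every} ordering of the connected sum, some level set has weight strictly exceeding $\max(\tr(M_1),\tr(M_2))$, or that the multiset of maxima is otherwise lexicographically forced upward; the proposed ``minimum cut weight once $P$ is crossed'' invariant is not defined, and it is not evident that any such cut-type quantity is monotone under legal swaps or that it controls the \emph{sequence} of maxima rather than just the trunk. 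Two further concrete issues. First, in dimension $n=2$ the boundaries $\boundary C_1$ and $\boundary C_2$ are triangle boundaries, so a simplicial gluing map is one of only six symmetries; there is essentially no ``skew'' available, and all the adversarial freedom lies in the choice of bricks $C_1, C_2$, which undercuts part of your proposed mechanism. Second, your description of the model examples is internally inconsistent: for a long prism capped at both ends a linear sweep has its maximum at the \emph{fattest} meridian, not the thinnest, so the claimed width profile needs to be re-derived before it can anchor the rest of the argument.
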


For many years, it was an open question as to whether Gabai's width for knots in $S^3$ satisfied an additivity property under connected sum. Blair and Tomova \cite{BT} answered the question negatively. But in \cite{TT}, Taylor and Tomova slightly modified the definition of width so as to make it additive under connected sum of (3-manifold, graph) pairs. 

\begin{conjecture}
There is a modification of Thompson's width, so that for any connected sum of simplicial $n$-manifolds $M_1$ and $M_2$, we have
\[
\width(M_1 \# M_2) = \width(M_1) \cup \width(M_2)
\]
\end{conjecture}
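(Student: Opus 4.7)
The plan is to follow the paradigm of Taylor--Tomova \cite{TT} in knot theory, where additivity of width was recovered by enlarging the category under study (from knots to (3-manifold, graph) pairs) and modifying the width definition to "absorb" the extra structure introduced by the connected sum sphere. In our setting, the analogous enlargement is to study \emph{marked pseudomanifolds}: pairs $(M, P)$ where $M$ is a simplicial pseudomanifold and $P$ is a distinguished simplicial $(n-1)$-sphere in $M$ corresponding to a choice of brick $C$ (the one that will be removed in a connected sum). I would then define a modified width $\width^\ast(M,P)$ using orderings of the bricks of $M$ in which the level-set function $\Lambda^\ast(j)$ discounts the facets of $P$ that already lie in $S_j$ but whose "other side" has not yet been attached.

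First I would establish the upper bound. Given thin orderings $\mc{O}_1$ and $\mc{O}_2$ for $(M_1, P)$ and $(M_2, P)$, I would concatenate them to produce an ordering on $M_1 \# M_2$ as in the proof of Corollary \ref{width bounds}, but now aligning the insertion point of $C_1$ (respectively $C_2$) with the moment in the opposite factor's ordering when $P$ is first fully traversed. Because $\Lambda^\ast$ discounts partial crossings of $P$, the maxima of the concatenated ordering should correspond exactly to the disjoint union of the maxima of the two sub-orderings, yielding $\width^\ast(M_1 \# M_2) \leq \width^\ast(M_1,P) \cup \width^\ast(M_2,P)$.

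Second I would establish the lower bound by adapting the structure of the proof of Theorem \ref{lower bound}. Given an arbitrary ordering $\mc{O}$ on $M_1 \# M_2$, the argument already extracts induced orderings $\mc{O}_1, \mc{O}_2$ on the two factors and, in Claims 1, 3, 5, 7, shows that every maximum for each factor's induced ordering is dominated by some maximum of $\mc{O}$. One would redo those claims with $\Lambda^\ast$ in place of $\Lambda$ and, critically, show that the correspondence between maxima is now a \emph{bijection} (not merely an inequality) — so that every maximum of $\mc{O}$ is actually "charged" to exactly one factor. Claim 2 (the observation that the position of $C_1$ is not itself a new maximum) should become the prototype of the extra cancellation that makes bijectivity possible under $\Lambda^\ast$.

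The main obstacle, and the real point of the conjecture, is identifying the correct modification. A naive subtraction of $\weight(S_j \cap P)$ from $\Lambda(j)$ will likely handle the simple case where $P$ is traversed monotonically during the ordering, but it should fail when $P$ is "spread" across multiple local maxima or when the ordering oscillates between the two sides of $P$. The correct modification must encode enough global information about how $\mc{O}$ interacts with $P$ that every height at which $P$ genuinely contributes to a local maximum is attributed correctly to one factor. It is plausible that one must track a refined surface invariant — perhaps analogous to a "net extent" measuring how much of $P$ has been crossed on each side — and modify width by this quantity rather than by a simple subtraction. Designing such an invariant so that both directions of the additivity inequality become tight simultaneously is where I expect the substantive difficulty to lie.
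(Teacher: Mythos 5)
The statement you are addressing is a \emph{Conjecture}: the paper states it without proof and explicitly calls it ``admittedly vague,'' offering two suggestive directions (the Taylor--Tomova modification allowing height functions with multiple minima, and the Bowman--Heisterkamp--Johnson replacement of linear orders by partial orders) without claiming either resolves it. There is therefore no ``paper's own proof'' to compare your attempt against, and your sketch is best read as one speculative plan among several, which you acknowledge: the ``real point of the conjecture,'' in your own words, is left unresolved.

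Viewed as a plan, there are concrete gaps beyond the admitted one. First, the central object --- the modified level-weight $\Lambda^\ast$ --- is never defined; you posit a ``discount'' on facets of $P$ and immediately concede the naive subtraction fails, so the plan has no candidate to test. Second, you move to a category of marked pairs $(M,P)$, but the conjecture is about the unmarked invariant $\width(M_i)$; you would have to show that $\width^\ast(M_i,P)$ with $P = \partial C_i$ recovers or suitably controls $\width(M_i)$, and this is not automatic since different choices of the brick $C_i$ can yield different marked invariants. Third, your framing is a renormalization of the existing linear-ordering width, whereas the paper's two hints are both modifications of a different flavor: Taylor--Tomova enrich the class of sweepouts (allowing multiple minima), and BHJ replace linear orders by partial orders. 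Your Claim-by-Claim mirroring of the proof of Theorem~\ref{lower bound} may well be the right scaffolding once a genuine $\Lambda^\ast$ exists, but as it stands the proposal restates the conjecture's difficulty rather than resolving it.
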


As further evidence for this (admittedly vague) conjecture, we remark that in Taylor and Tomova's construction, height functions on 3-manifolds (in particular, $S^3$) are allowed to have multiple minima and that this is crucial (see \cite[Section 6]{TT}) to making width additive. In a different direction, Bowman, Heistercamp, and Johnson \cite{BHJ} were able to improve Johnson's clustering algorithm by using a version of width based on partial orders, rather than linear orders. Perhaps there is a version of the complexity in that paper which would be useful?

Finally, even in our setting, we have not addressed the case of odd dimensional manifolds.

\begin{conjecture}
If $M_1$ and $M_2$ are closed odd-dimensional triangulated manifolds, then
\[
\max(\width(M_1), \width(M_2)) \leq \width(M_1 \# M_2)
\]
for every connected sum.
\end{conjecture}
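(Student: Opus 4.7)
The plan is to mimic the proof of Theorem~\ref{lower bound}, with adjustments to accommodate the fact that in odd dimensions $\Lambda$ can remain constant over intervals of heights. Given a thin ordering $\mc{O}$ of $M = M_1 \# M_2$, I would form orderings $\mc{O}_1$ of $M_1$ and $\mc{O}_2$ of $M_2$ exactly as in that proof: extract the subsequences of bricks lying on each side of the connect-sum sphere $P$, insert $C_1$ just after the bricks of the relevant component of $X_2$ are exhausted, and place $C_2$ at height $1$ in $\mc{O}_2$. The goal is to establish $\width(\mc{O}_i) \leq \width(\mc{O})$ for $i = 1,2$, which, since $\mc{O}$ is thin, yields the conjecture.

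The decomposition identities relating $\Lambda_1$ and $\Lambda_2$ to $\Lambda$ (the formulas involving $\epsilon_1, \hdots, \epsilon_8$ and $x_1, x_2$) depend only on the combinatorics of a simplicial complex and transfer unchanged. Claims~1, 2, 5, and 6 of the even-dimensional proof used these identities together with local finiteness to force $x_i(\cdot) \geq \weight(P) = n+1$; these arguments do not use the parity of $n$ and so go through verbatim. What fails in odd dimensions are the analogues of Claims~3 and 7, which in the even case produced, for each maximum of $\mc{O}_i$ whose corresponding index of $\mc{O}$ is not itself a maximum, a nearby strictly larger maximum of $\mc{O}$. Those arguments rested on the identity $\Lambda(j) - \Lambda(j-1) = (n+1) - 2k$, which is nonzero when $n$ is even; in odd dimensions the difference can vanish, so maxima of $\mc{O}$ typically appear as plateaus and only weak inequalities are available.

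My proposed resolution is to work with maxima at the level of plateaus. A plateau-maximum of $\mc{O}_i$ of height $h$ and length $L$ should be matched with $L$ distinct maxima of $\mc{O}$, each of $\Lambda$-value at least $h$, lying in a prescribed range of heights determined by the interval spanned by the corresponding bricks. Such a matching can be built by tracking how bricks of the opposite side $X_{3-i}$ interleave within that range: each such brick that strictly lowers $\Lambda_i$ on the plateau is compensated either by an equal-height plateau of $\mc{O}$ in the same range, or by an ascent-then-descent of $\Lambda$ that produces a strictly larger local maximum of $\mc{O}$ via the defining properties of local maxima.

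The hardest step will be the bookkeeping of multiplicities when a plateau of $\mc{O}_i$ is split by several alternating ascents and descents of $\mc{O}$; verifying that the proposed injection from maxima of $\mc{O}_i$ into maxima of $\mc{O}$ can be constructed uniformly across all interleaving patterns, and that it respects the lexicographic ordering used to compare widths, is the technical crux. As in the even case, the symmetric statement for $\mc{O}_2$ then follows by applying the same analysis to the reverse ordering.
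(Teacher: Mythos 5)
This statement appears in the paper as a \emph{conjecture}, not a theorem, and the authors explicitly flag it as open (``even in our setting, we have not addressed the case of odd dimensional manifolds''). There is therefore no proof in the paper against which to compare your attempt, and a complete argument would be a genuinely new result.

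Your diagnosis of where the even-dimensional proof of Theorem~\ref{lower bound} breaks down is accurate and matches the paper's own remark: when $n$ is even each $n$-simplex has an odd number of facets, so $\Lambda(i) - \Lambda(i-1) = (n+1) - 2k$ is never zero, whereas for odd $n$ the quantity $(n+1)/2$ is an integer and $\Lambda$ can plateau. Since every index on a plateau counts as a local maximum, a plateau of length $L$ contributes $L$ repeated entries to the width sequence, and this multiplicity must be accounted for. You correctly observe that Claims~3 and~7 of the even-dimensional proof, which produce a strictly larger maximum of $\mc{O}$ for each maximum of $\mc{O}_i$ whose corresponding index fails to be a maximum of $\mc{O}$, lean on the strict inequality that plateaus destroy.

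However, what you offer as a ``resolution'' is not a proof but a hope. The claim that ``each such brick that strictly lowers $\Lambda_i$ on the plateau is compensated either by an equal-height plateau of $\mc{O}$ in the same range, or by an ascent-then-descent of $\Lambda$'' is exactly the statement that needs to be established, and you acknowledge yourself that the injection of maxima of $\mc{O}_i$ into maxima of $\mc{O}$ across ``all interleaving patterns'' is unverified. In particular it is not clear that the $\epsilon$-bookkeeping identities, combined with Claims~1 and~5 alone, force the needed inequality to hold with the correct multiplicity: the even-dimensional argument uses strictness in several places beyond Claims~3 and~7 (for example, in concluding that $r+1$ is not a maximum in Claim~2, and in the corresponding-index accounting), and weakening those to non-strict inequalities can collapse distinct maxima of $\mc{O}$ onto a single plateau in $\mc{O}_i$, potentially losing multiplicity in the wrong direction. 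Until the matching is constructed explicitly and shown to be injective and $\Lambda$-nondecreasing on the level of the width \emph{multiset}, this remains a plan of attack rather than a proof, consistent with the paper's decision to state the claim as a conjecture.
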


\subsection{Extensions}

\begin{question}
Is there a version of Theorem \ref{Main Theorem} for weighted simplicial complexes? or polytopal complexes?
\end{question}

It seems likely that to adapt this paper from pseudomanifolds to the setting of (not necessarily pure) simplicial complexes, we would need to take into account the number of $n$-simplices incident to each $(n-1)$-face. One natural choice of complexity is as follows. Let $\mc{O}$ be an ordering of the $n$-simplices (bricks) of $M$. Let $M_i$ be the union of all bricks $T$ such that $\mc{O}(T) \leq i$. For a facet $F$, let $\deg(F;M_i)$ be the number of bricks in $M_i$ having $F$ as a facet and let $\deg(F;M^C_i)$ denote the number of bricks not in $M_i$ having $F$ as a facet. Let $\Lambda(i) = \sum\limits_F \deg(F;M_i)\deg(F;M^C_i)\weight(F)$ where the sum is over all facets in $M$. Let $S_i$ be the union of all facets $F$ with $\deg(F;M_i)\deg(F;M^C_i) \neq 0$. Define $\width(\mc{O})$ as before, but using this $\Lambda$. Observe that when $\deg(F;M) = 2$ for every facet $F$, then $M$ is a brick complex and the new definitions for $\Lambda$ and $S_i$ coincide with the previous definitions. Some of the results of this paper can be extended to this new setting, but the fundamental difficulty in obtaining completely satisfactory answers is finding the ``right'' notion of what it means to vary a surface across a brick. 

A Pachner move on a triangulation is one of a set of local moves converting the triangulation into another triangulation.  The exact definition of the set of moves depends on the dimension of the simplicial complex. It is known that any two PL-homeomorphic triangulated manifolds are related by a sequence of Pachner moves. See \cite{Lickorish}. One of the Pachner moves is often called a \defn{stabilization}. Given an $n$-simplex $A$ in an $n$-dimensional triangulated manifold $M$, insert a new vertex $x$ in the barycenter of $A$, remove $A$, and insert $(n+1)$-simplices. Each new simplex has $n$ vertices lying in a facet of $\boundary A$ as well as having $x$ as a vertex. Let $M'$ be the new triangulated manifold. (The manifolds $M$ and $M'$ are homeomorphic, though the triangulations differ.) If $\mc{O}$ is an ordering on $M$, there is a natural ordering $\mc{O}'$ given by replacing $A$ with the new bricks $A_1, \hdots, A_{n+1}$ and then shifting the subsequent bricks of $M$ by $n$. Different choices of orderings on the new bricks of course give different orderings $\mc{O}'$. We call any of these new orderings a \defn{stabilization} of $\mc{O}$.

The following suggests an analogue of the Reidemeister-Singer theorem for Heegaard splittings of 3-manifolds.

\begin{question}
Given orderings $\mc{O}_1$ and $\mc{O}_2$ of a triangulated manifold $M$, does there exist a sequence of stabilizations of $\mc{O}_1$ and $\mc{O}_2$ to arrive at triangulated manifold $M'$ with orderings $\mc{O}'_1$ and $\mc{O}'_2$ respectively, such that there is an ordering $\mc{O}$ of $M'$ which thins to both $\mc{O}'_1$ and $\mc{O}'_2$?
\end{question}

\begin{bibdiv}
\begin{biblist}

\bib{Bachman}{article}{
   author={Bachman, David},
   title={Topological index theory for surfaces in 3-manifolds},
   journal={Geom. Topol.},
   volume={14},
   date={2010},
   number={1},
   pages={585--609},
   issn={1465-3060},
   review={\MR{2602846}},
}

\bib{BT}{article}{
   author={Blair, Ryan},
   author={Tomova, Maggy},
   title={Width is not additive},
   journal={Geom. Topol.},
   volume={17},
   date={2013},
   number={1},
   pages={93--156},
   issn={1465-3060},
   review={\MR{3035325}},
}

\bib{BS}{book}{
   author={Bobenko, Alexander I.},
   author={Suris, Yuri B.},
   title={Discrete differential geometry},
   series={Graduate Studies in Mathematics},
   volume={98},
   note={Integrable structure},
   publisher={American Mathematical Society, Providence, RI},
   date={2008},
   pages={xxiv+404},
   isbn={978-0-8218-4700-8},
   review={\MR{2467378}},
   doi={10.1007/978-3-7643-8621-4},
}
\bib{Bobenko}{collection}{
   title={Advances in discrete differential geometry},
   editor={Bobenko, Alexander I.},
   publisher={Springer, [Berlin]},
   date={2016},
   pages={x+439},
   isbn={978-3-662-50446-8},
   isbn={978-3-662-50447-5},
   review={\MR{3523864}},
   doi={10.1007/978-3-662-50447-5},
}

\bib{BHJ}{article}{
author={Bowman, R. Sean},
author={Heisterkamp, Douglas R.},
author={Johnson, Jesse}
title={Thin tree position},
eprint={arXiv:1408.4074},
date={2014}
}

\bib{DZ}{article}{
   author={Davies, Derek},
   author={Zupan, Alexander},
   title={Natural properties of the trunk of a knot},
   journal={J. Knot Theory Ramifications},
   volume={26},
   date={2017},
   number={12},
   pages={1750080, 9},
   issn={0218-2165},
   review={\MR{3718281}},
}

\bib{Gabai}{article}{
   author={Gabai, David},
   title={Foliations and the topology of $3$-manifolds. III},
   journal={J. Differential Geom.},
   volume={26},
   date={1987},
   number={3},
   pages={479--536},
   issn={0022-040X},
   review={\MR{910018}},
}

\bib{JR}{article}{
   author={Jaco, William},
   author={Rubinstein, J. Hyam},
   title={PL minimal surfaces in $3$-manifolds},
   journal={J. Differential Geom.},
   volume={27},
   date={1988},
   number={3},
   pages={493--524},
   issn={0022-040X},
   review={\MR{940116}},
}

\bib{Johnson}{article}{
   author={Johnson, Jesse},
   title={Topological graph clustering with thin position},
   journal={Geom. Dedicata},
   volume={169},
   date={2014},
   pages={165--173},
   issn={0046-5755},
   review={\MR{3175242}},
}

\bib{KetoverLiokumovich}{article}{
 title={On the existence of unstable minimal Heegaard surfaces},
 author={Ketover, Daniel},
 author={Liokumovich, Yevgeny},
 eprint={arXiv:1709.09744v1}
}

\bib{Lickorish}{article}{
   author={Lickorish, W. B. R.},
   title={Simplicial moves on complexes and manifolds},
   conference={
      title={Proceedings of the Kirbyfest},
      address={Berkeley, CA},
      date={1998},
   },
   book={
      series={Geom. Topol. Monogr.},
      volume={2},
      publisher={Geom. Topol. Publ., Coventry},
   },
   date={1999},
   pages={299--320},
   review={\MR{1734414}},
}

\bib{MeeksPerez}{book}{
   author={Meeks, William H., III},
   author={P\'erez, Joaqu\'\i n},
   title={A survey on classical minimal surface theory},
   series={University Lecture Series},
   volume={60},
   publisher={American Mathematical Society, Providence, RI},
   date={2012},
   pages={x+182},
   isbn={978-0-8218-6912-3},
   review={\MR{3012474}},
}

\bib{Matveev}{book}{
   author={Matveev, Sergei},
   title={Algorithmic topology and classification of 3-manifolds},
   series={Algorithms and Computation in Mathematics},
   volume={9},
   edition={2},
   publisher={Springer, Berlin},
   date={2007},
   pages={xiv+492},
   isbn={978-3-540-45898-2},
   review={\MR{2341532}},
}

\bib{Ozawa}{article}{
   author={Ozawa, Makoto},
   title={Waist and trunk of knots},
   journal={Geom. Dedicata},
   volume={149},
   date={2010},
   pages={85--94},
   issn={0046-5755},
   review={\MR{2737680}},
}

\bib{PH}{book}{
   author={Penner, R. C.},
   author={Harer, J. L.},
   title={Combinatorics of train tracks},
   series={Annals of Mathematics Studies},
   volume={125},
   publisher={Princeton University Press, Princeton, NJ},
   date={1992},
   pages={xii+216},
   isbn={0-691-08764-4},
   isbn={0-691-02531-2},
   review={\MR{1144770}},
}

\bib{Pitts}{book}{
   author={Pitts, Jon T.},
   title={Existence and regularity of minimal surfaces on Riemannian
   manifolds},
   series={Mathematical Notes},
   volume={27},
   publisher={Princeton University Press, Princeton, N.J.; University of
   Tokyo Press, Tokyo},
   date={1981},
   pages={iv+330},
   isbn={0-691-08290-1},
   review={\MR{626027}},
}

\bib{Rub}{article}{
 author={Rubinstein, Hyam},
 title = {Minimal surfaces in geometric 3-manifolds},
 journal={Lecture Notes},
 date={2004},
 eprint={https://researchers.ms.unimelb.edu.au/~rubin/publications/minimalsurfacenotes8.pdf}
}

\bib{SchSch}{article}{
   author={Scharlemann, Martin},
   author={Schultens, Jennifer},
   title={3-manifolds with planar presentations and the width of satellite
   knots},
   journal={Trans. Amer. Math. Soc.},
   volume={358},
   date={2006},
   number={9},
   pages={3781--3805},
   issn={0002-9947},
   review={\MR{2218999}},
}

\bib{SchTh}{article}{
   author={Scharlemann, Martin},
   author={Thompson, Abigail},
   title={Thin position for $3$-manifolds},
   conference={
      title={Geometric topology},
      address={Haifa},
      date={1992},
   },
   book={
      series={Contemp. Math.},
      volume={164},
      publisher={Amer. Math. Soc., Providence, RI},
   },
   date={1994},
   pages={231--238},
   review={\MR{1282766}},
}
Return

\bib{TT}{article}{
   author={Taylor, Scott},
   author={Tomova, Maggy},
   title={Additive invariants for knots, links and graphs in 3-manifolds},
   journal={Geom. Topol.},
   volume={22},
   date={2018},
   number={6},
   pages={3235--3286},
   issn={1465-3060},
   review={\MR{3858764}},
   doi={10.2140/gt.2018.22.3235},
}
\bib{Thompson97}{article}{
   author={Thompson, Abigail},
   title={Thin position and bridge number for knots in the $3$-sphere},
   journal={Topology},
   volume={36},
   date={1997},
   number={2},
   pages={505--507},
   issn={0040-9383},
   review={\MR{1415602}},
}

\bib{Thompson}{article}{
author={Thompson, Abigail},
title={Finding geodesics in a triangulated 2-sphere},
eprint={arXiv:1408.5949},
date={2014}
}

\bib{Thompson-pc}{article}{
author={Thompson, Abigail},
title = {personal communication}
}

\bib{Ziegler}{book}{
   author={Ziegler, G\"unter M.},
   title={Lectures on polytopes},
   series={Graduate Texts in Mathematics},
   volume={152},
   publisher={Springer-Verlag, New York},
   date={1995},
   pages={x+370},
   isbn={0-387-94365-X},
   review={\MR{1311028}},
}


\end{biblist}
\end{bibdiv}

\end{document}